\newcommand{\1}{\mbox{1}\hspace{-0.25em}\mbox{l}}
\newcommand{\rma}{\mathrm{a}}
\newcommand{\rmq}{\mathrm{q}}
\newcommand{\rms}{\mathrm{s}}
\newcommand{\st}{\mathop{\mathrm{s.{\,}t.}}}
\newtheorem{assumption}{Assumption}
\journalname{Optimization and Engineering}
\begin{document}

\title{Second-order cone programming for distributionally robust compliance optimization of trusses considering input distribution uncertainty}

\titlerunning{Second-order cone programming for distributionally robust compliance optimization...}        

\author{Takumi Fujiyama \and Yoshihiro Kanno}


\institute{
T.~Fujiyama \and Y.~Kanno
\at Department of Mathematical Informatics, Graduate School of Information Science and Technology, The University of Tokyo, Hongo  7-3-1, Bunkyo-ku, Tokyo, 1138656, Japan \\
\email{takumi007@g.ecc.u-tokyo.ac.jp}
\and Y. Kanno
\at Mathematics and Informatics Center, The University of Tokyo, Hongo 7-3-1, Bunkyo-ku, Tokyo, 1138656, Japan
}

\date{Received: date / Accepted: date}

\maketitle

\begin{abstract} 
Reliability-based design optimization (RBDO) is a methodology for designing systems and components under the consideration of probabilistic uncertainty. In practical engineering, the number of input data is often limited, which can damage the validity of the optimal results obtained by RBDO. Confidence-based design optimization (CBDO) has been proposed to account for the uncertainty of the input distribution. However, this approach faces challenges, computational cost and accuracy when dealing with highly nonlinear performance constraints. 
In this paper, we consider the compliance minimization problem of truss structures with uncertain external forces. Armed with the advanced risk measure, conditional Value-at-Risk (CVaR), we formulate a bi-objective optimization problem for the worst-case expected value and the worst-case CVaR of compliance, which allows us to account for the tail risk of performance functions not addressed in CBDO. Employing kernel density estimation for estimation of the input distribution allows us to eliminate the need for modeling the input distribution. We show that this problem reduces to a second-order cone programming when assigning either uniform kernel or triangular kernel. 
Finally, through numerical experiments, we obtain the Pareto front for the bi-objective optimization problem of the worst-case expected value and CVaR of compliance of truss structures, and confirm the changes in the Pareto solutions.

\keywords{
Distributionally robust optimization 
\and Bi-objective optimization
\and Conditional value-at-risk
\and Truss topology optimization, 
\and Compliance}

\subclass{
74P10: 
\and 
90C15 
\and
90C90 
\and
90C31 
}
\end{abstract}

\section{Introduction} \label{1}

\subsection{Background} \label{1.1}
Design optimization of structures is a methodology to obtain a reasonable design of a structure by using a mathematical model representing the structural behavior and solving an optimization problem based on the model. 
The design domain and input data such as the external forces 
are usually given, and the optimal design is often obtained by 
maximizing a performance function of a structure under the constraint of 
an upper bound on the volume of the structure. 

Structures in the real-world are subjected to various uncertainties, 
such as external forces that cannot be anticipated during the design 
phase and variations in material properties that occur during the 
manufacturing process. Structures designed using deterministic models 
can possibly be vulnerable to these uncertainties in input data, which may result in compromised safety and performance in real-world environments. Therefore, it is pivotal to consider the uncertainties in input data during the design phase and perform optimization to ensure the safety and high-performance of designed structures.

{\em Reliability-based design optimization\/} (RBDO) \cite{Acar2021,DerKilureghian22} is one of the probabilistic methods that considers such uncertainty in design optimization of structures. In RBDO, uncertain parameters are treated as random variables that follow a certain probability distribution. Since the values of the performance function of structures can be regarded as random variables, uncertainties are considered by imposing constraints on the reliability, that is, the probability that the values of the performance function satisfy the constraint conditions. 
Recent studies have proposed scenario optimization approaches to RBDO that address the challenges of probabilistic constraint satisfaction in the presence of epistemic uncertainty. For example, Rocchetta et al. \cite{RC21} introduced a framework based on sampling and scenario-based formulation to ensure reliability with finite samples. An extension of this work was later presented in \cite{RCK20}, where the authors proposed a soft-constrained modulation of failure probability bounds to handle risk and reliability in a unified manner.
Recently, there has been significant progress in the study of RBDO that 
also considers the uncertainty in the probability distributions of 
uncertain parameters, called {\em confidence-based design optimization\/} 
(CBDO) \cite{MCCGLG18,JCL19,WHYWG20}. When sufficient input data is 
not available, it is difficult to accurately estimate the input 
probability distribution, making it challenging to properly evaluate the 
structural reliability. As a result, designs obtained through RBDO can become unsafe. In CBDO, the uncertainty in the probability distribution is considered by imposing constraints on the confidence, which is the probability that the reliability constraints are satisfied.
Moon \textit{et al.}\ \cite{MCCGLG18} proposed a method based on a 
hierarchical Bayesian model and the {\em Monte Carlo simulation\/} (MCS). 
This method features a double-loop MCS, where the inner loop is for the 
reliability assessment and the outer loop is for the confidence 
assessment. This double-loop structure results in a very high computational cost. Additionally, it assumes a specific type of input distribution, which can lead to an unreliable design if the assumed distribution type differs from the true distribution. Jung \textit{et al.}\ \cite{JCL19} proposed a method that replaces the MCS-based 
reliability assessment with the {\em first-order reliability method\/} 
(FORM). FORM approximates the reliability by linearizing the performance 
function, which can reduce accuracy for a highly nonlinear performance 
function. This method still retains a double-loop MCS structure with the 
outer loop for the confidence assessment, and thereby maintains high 
computational cost. Wang \textit{et al.}\ \cite{WHYWG20} proposed a 
single-loop method using the {\em second-order reliability method\/} 
(SORM) for the reliability assessment, which improves accuracy through 
the second-order approximation and reduces computational cost. Jung 
\textit{et al.}\ \cite{JCDL20} proposed a method to estimate the optimal 
number of samples by considering the costs of optimizing the design 
variables and adding new samples. Hao \textit{et al.}\ \cite{HYYZWW22} 
achieved significant computational cost reduction by decoupling the 
reliability optimization and the confidence analysis. Furthermore, Jung 
\textit{et al.}\ \cite{JKCHL24} proposed a sampling-based RBDO method 
that evaluates the confidence by estimating the input distributions using 
the multivariate kernel density estimation on bootstrap samples of the 
input data and calculating the reliability under each of the estimated  distributions.

In contrast, in the field of mathematical optimization, {\em robust 
optimization\/} (RO) \cite{BN98} is used as an optimization 
framework that handles uncertainties. In RO, an uncertainty set is 
constructed as a set of the possible values of uncertain parameters, and 
the uncertainty is considered by solving a two-stage optimization 
problem that seeks the optimal decision variables for the worst-case 
uncertain parameter values within this uncertainty set.
As an application of RO in the field of structural reliability, Ben-Tal 
and Nemirovski \cite{BN02} optimized the design of antennas. Moreover, 
Kanno and Takewaki \cite{KT06} proposed a method for the optimal design 
of trusses with uncertain external forces under a volume constraint, describing it as a nonlinear semidefinite programming problem and approximating it with sequential semidefinite 
programming. Furthermore, Kanno \cite{Kanno19,Kanno20} 
proposed a robust optimization problem that conservatively approximates 
CBDO by constructing an uncertainty set for external forces using the order statistics, and minimized the volume of structures under compliance constraints.

{\em Distributionally robust optimization\/} (DRO) \cite{DY10,GS10} is 
one of the models that handle uncertainties and have been actively 
studied in recent years. While RO constructs an uncertainty set for the 
possible values of uncertain parameters, DRO treats uncertain parameters 
as random variables and constructs an uncertainty set, called the 
{\em distributionally uncertainty set\/}, for the 
probability distributions that they follow. By incorporating probabilistic aspects, DRO overcomes the excessive conservativeness of optimal solutions that is a challenge in RO. Several methods have been proposed for constructing the distributionally uncertainty set. Delage and Ye \cite{DY10} proposed a moment-based method. They construct an ellipsoid using the sample mean and sample variance-covariance matrix calculated from samples of uncertain parameters, and define the uncertainty set as the set of all distributions that share the first and second moments within this ellipsoid. Mohajerin Esfahani and Kuhn \cite{MK18} proposed a method based on the Wasserstein metric. They define the uncertainty set as the set of all distributions whose Wasserstein distance from the empirical distribution is within a specified threshold. Moreover, Bertsimas \textit{et al.}\ \cite{BGK18} proposed a method that uses goodness-of-fit tests to define the uncertainty set as the set of all distributions that do not reject the null hypothesis of being the true distribution. As an application of DRO in the field of structural reliability, Kanno \cite{Kanno22} constructed an uncertainty set of distributions based on moments and formulated the problem of minimizing the volume of trusses under compliance constraints as a nonlinear semidefinite programming problem. Furthermore, Chen \textit{et al.}\ \cite{CRKGY24} applied DRO to the aerodynamic shape optimization of airfoils for transonic speeds.

In financial engineering, a recently used risk measure is 
{\em conditional value-at-risk\/} (CVaR) \cite{RU00}.  
A reliability constraint in RBDO is often interpreted as controlling a particular quantile—also known as the {\em value-at-risk} (VaR)—of the performance function. CVaR is known as a convex and conservative approximation of VaR \cite{RU02}, and is a risk measure that considers the average outcome in the worst-case scenarios beyond the VaR. Rather than just ensuring that failures are rare, CVaR focuses on how severe those failures can be, offering a more conservative and risk-averse criterion. This distinction is particularly important when designing structures that must maintain their functionality and safaty under extreme conditions. The limitations of VaR and the advantages of CVaR in addressing tail risk have been well documented in the literature, including the work by  Embrechts {\em et al}. \cite{ESW21}. 
Furthermore, CVaR possesses a theoretically desirable property of coherence \cite{RU02} and is a convex risk measure, which makes it tractable in optimization \cite{NS06}.
Recently, research has been conducted on the use of CVaR in the field of 
structural reliability \cite{RR10,BR22,BOR23,CNK20,CKNRW22}. By 
using CVaR as a constraint function, it is possible to evaluate the tail 
of the distribution of structural performance, which cannot be assessed 
by reliability constraints used in RBDO and CBDO. 
Evaluation of the tail allows for risk assessment that considers the probability of significant performance or safety degradation. Rockafellar and Royset \cite{RR10} and Byun and Royset \cite{BR22} have shown that using CVaR constraints instead of reliability constraints can lead to more conservative designs. Byun \textit{et al.}\ \cite{BOR23} considered a penalty function using CVaR for a first-order approximated performance function and formulated the unconstrained optimization problem obtained by the penalty method as an approximate problem of the optimal design problem under CVaR constraints. Chaudhuri \textit{et al.}\ \cite{CNK20,CKNRW22} compared the differences between reliability constraints and CVaR constraints in terms of formulations and numerical experiments.
  
\subsection{Objective} \label{1.2}
In structural optimization, we model the uncertainty of external forces as a random vector. We consider a bi-objective optimization of the expected value and CVaR, with compliance of trusses as the measure of structural performance. 
By using the kernel density estimation, we can formulate the 
optimization problem from samples of external forces without the need to 
model the input distribution. Considering the uncertainty of the input 
distribution, we formulate the worst-case CVaR constrained worst-case expected value minimization problem. We reduce the problem to a convex optimization problem. 
We show that this convex optimization problem can be recast as a {\em 
second-order cone programming\/} problem when the kernel function is either a uniform kernel or a triangular kernel.

\subsection{Contributions} \label{1.3}
The relationship to prior studies and our original contributions are summarized as follows:

Our framework builds upon recent developments in the field of portfolio optimization in finance, where uncertainty sets are constructed using the kernel density estimation combined with the $\phi$-divergence. These methods estimate the underlying distribution in a non-parametric manner and define a robustness region over sample weights. Motivated by this idea, we extend the kernel density estimation and the $\phi$-divergence formulation to the context of structural optimization under uncertain external load. 

The proposed approach is partially data-driven. The input distribution is constructed in a fully non-parametric and data-driven manner without assuming any predefined distributional form. However, the uncertainty set over the sample weights is not data-driven, as its size is determined by a level of uncertainty. This parameter is not estimated from the data, but rather chosen in advance to reflect the desired level of conservativeness. Therefore, while the input distribution reflects the empirical data, the extent of distributional robustness is model-based.

Building on this foundation, our main contributions are as follows: 
\begin{itemize}
  \item We propose a bi-objective optimization problem that simultaneously minimizes the worst-case expected value and the worst-case CVaR of compliance. This allows for integrated control of both average and tail structural risks.
 
  \item For uniform and triangular kernels, we derive explicit second-order cone programming formulations, enabling efficient solution via the primal-dual interior-point methods with polynomial-time complexity.
\end{itemize}
These features distinguish our method from conventional robust or reliability-based design approaches, and enable risk-aware, tractable optimization under epistemic uncertainty.

\subsection{Oganization} \label{1.4}
The paper is organized as follows. In Section \ref{2}, we review the work of Liu \textit{et al.}\ \cite{LYY22} who proposed a distributionally robust optimization problem based on the kernel density estimation and reduced it to a single-stage optimization.
In Section \ref{3}, we summarize the compliance minimization problem of 
trusses addressed in this paper. Then, we formulate a bi-objective 
optimization problem of the worst-case expected value and the worst-case 
CVaR of the compliance by using the distributionally robust optimization prepared in Section \ref{2}. We reformulate it as the worst-case CVaR constrained the worst-case expected value minimization via the $\varepsilon$-constraint method, and show that it is a convex optimization problem. 
In Sections \ref{4} and \ref{5}, we show that the problems 
formulated in Section \ref{3} can be reduced to a second-order cone 
programming problem when adopting either the uniform kernel or the triangular kernel as a kernel function, respectively.
Section \ref{6} reports the results of numerical experiments on the optimization problems formulated in Sections \ref{4} and \ref{5}.
Finally, Section \ref{7} concludes the paper.

\subsection{Notation} \label{1.5}
In our notation, ${}^{\top}$ denotes the transpose of a vector or a matrix. 
Let the set of $m$-dimensional real vectors with strictly positive components and the set of $m$-dimensional real vectors with non-negative components are denoted as $\mathbb{R}_{++}^m$ and $\mathbb{R}_{+}^m$, respectively. 
We use $\mathcal{S}_+^m$ to denote the set of $m \times m$ symmetric 
positive semidefinite matrices. The zero vectors are denoted as $\bm{0}$, 
and let $\bm{1}={(1, \dots, 1)}^\top$. The Euclidean norm of a vector is 
denoted by $\| \cdot \|$. We use $\mathbb{E}_P [\,\cdot\,]$ to denote the expected value under the distribution $P$. Define 
${[\,\cdot\,]}^+ : \mathbb{R} \to \mathbb{R}$ by 
${[\,\cdot\,]}^+ = \max \{ 0, \,\cdot\, \}$. We use $N(\bm{\mu}, \varSigma)$ to denote the multivariate normal distribution with mean vector $\bm{\mu}$ and variance-covariance matrix $\varSigma$. For a function $\phi: \mathbb{R}^m \to \mathbb{R} \cup \{ +\infty \}$, its conjugate function $\phi^*: \mathbb{R}^m \to \mathbb{R} \cup \{ +\infty \}$ is defined by
\begin{align*}
\phi^*(\bm{s}) = \sup \{\bm{s}^\top \bm{t}-\phi(\bm{t}) \}.
\end{align*}

\section{Preliminaries} \label{2}
This section summarizes Liu \textit{et al.} \cite{LYY22}, who proposed a distributionally robust optimization problem based on kernel 
density estimation and reduced it to a single-stage optimization problem. 
This is intended to prepare for the formulations in Section \ref{3}.

\subsection{Kernel density estimation based distributionally robust optimization problem} \label{2.1}
Let $\bm{x} \in \mathcal{X} \subseteq \mathbb{R}^m$ denote the vector of 
design variables, where $m$ is the number of independent design 
variables and $\mathcal{X}$ is a non-empty convex set. 
We use $\bm{\xi} \in \mathbb{R}^d$ to denote a random variable vector, where $d$ is the dimension of $\bm{\xi}$. More precisely, 
$\bm{\xi}$ is defined on a probability space $(\varOmega, \mathcal{F}, \mathbb{P}), $ where $\varOmega$ is a compact sample space, $\mathcal{F}$ is a $\sigma$-algebla of $\varOmega$, and $\mathbb{P}$ is a probability measure on measurable space $(\varOmega, \mathcal{F})$. Furthermore, let $f: \mathbb{R}^{m}\times\mathbb{R}^{d}\to \mathbb{R}$ be a performance function of a structure. Then, performance function values of the structure $f(\bm{x}; \bm{\xi})$ can be regarded as a random variable.

For a given set of samples $\{ {\hat{\bm{\xi}}_1}, \dots, {\hat{\bm{\xi}}_n} \}$ of the random variable $\bm{\xi}$ drawn from distribution $P$, we can obtain 
a set of the corresponding structural performance values of the structure $\{ f(\bm{x}; \hat{\bm{\xi}}_1), \dots, f(\bm{x}; \hat{\bm{\xi}}_n) \}$. Here, $n$ denotes the number of samples.
Following the methodology outlined in \cite{LYY22}, we estimate 
distribution $P$ by the weighted kernel density estimator 
\begin{align}
  \hat{p}_{\bm{w}} (y) = \frac{1}{h} \sum_{i=1}^{n} w_i k \left( \frac{y - f(\bm{x}; \hat{\bm{\xi}}_i)}{h}\right), \label{WKDE}
\end{align}
and construct a distributionally uncertainty set. 
In this expression, the weight vector is $\bm{w} = {(w_1, \dots, 
w_n)}^\top \in \mathcal{W}$, where $\mathcal{W}\subset\mathbb{R}^n$ is defined by
\begin{align*}
  \mathcal{W}= \{ \bm{w} \in \mathbb{R}^n \mid \bm{1}^\top \bm{w} =1,\ \bm{w} \geq \bm{0} \}.
\end{align*}
Moreover, $k: \mathbb{R} \to \mathbb{R}_+$ is a kernel function, and $h\in\mathbb{R}_{++}$ is a constant representing the bandwidth of the kernel function. In this paper, we assume that kernel function $k$ satisfies Assumption \ref{assumption1} and Assumption \ref{assumption2}. 
\begin{assumption} \label{assumption1}
$k$ is a bounded non-negative function satisfying $\int_{-\infty}^\infty k(y)\textup{d}y=1$. 
\end{assumption}
\begin{assumption} \label{assumption2}
$k(y) = k(-y)$ is satisfied for any $y \in \mathbb{R},$ and $\int_{-\infty}^{\infty} y^2 k(y) \textup{d}y < \infty$ holds.
\end{assumption}

The $\phi$-divergence between two weight vectors $\bm{w}, \bm{w}^0 \in \mathbb{R}^n$ is defined by
\begin{align}
  I_\phi(\bm{w}, \bm{w}^0) = \sum_{i=1}^n w_i^0 \phi \left( \frac{w_i}{w_i^0} \right). \label{phi-divergence}
\end{align}
Here, $\phi: \mathbb{R} \to \mathbb{R} \cup \{+\infty\}$ satisfies 
$\phi(1) = 0$, $\phi(t) = + \infty$ for any $t<0$, 
and is convex on $\mathbb{R}_{+}$. Furthermore, for $a>0$, we define $0\phi(a/0) := a \displaystyle \lim_{t\to \infty}\phi(t)/t$ and $0\phi(0/0) :=0$. 
With parameter $\tau \in \mathbb{R}_{+}$ representing the level of uncertainty, 
we define the uncertainty set for the weights $\mathcal{W}_\phi^\tau 
\subset \mathbb{R}^n$ by 
\begin{align}
  \mathcal{W}_\phi^\tau = \left\{ \bm{w} \mid \bm{w} \in \mathcal{W},\ I_\phi(\bm{w}, \bm{w}^0) \leq \tau \right\}, \label{W_phi^tau}
\end{align}
which is called the {\em weight uncertainty set\/}. 
Here, the weight vector $\bm{w}^0$, which represents the center of 
$\mathcal{W}_\phi^\tau$, is chosen such that $\bm{w}^0 \in \mathcal{W}$. 
In the absence of information about the distribution $P$,  a uniform 
weight $\bm{w}^0 = n^{-1}\bm{1}$ is generally adopted. The uncertainty 
set of the distribution that $f(\bm{x}; \bm{\xi})$ belongs to is 
constructed using the weighted kernel density estimator \eqref{WKDE} and 
$\mathcal{W}_\phi^\tau$ as
\begin{align}
  \mathcal{P}_{\mathcal{W}_{\phi}^\tau} := \left\{ {\hat{p}}_{\bm{w}}(y) = \frac{1}{h} \sum_{i=1}^n w_i k\left( \frac{y - f(\bm{x}; \hat{\bm{\xi}}_i)}{h} \right) \middle| \bm{w} \in \mathcal{W}_\phi^\tau \right\}, \label{KDE-based DUS}
\end{align}
which is called the {\em distributionally uncertainty set\/}. 
Using \eqref{KDE-based DUS}, we formulate the distributionally robust expected value minimization problem as 
\begin{alignat}{3}
  &\underset{\bm{x} \in \mathcal{X}}{\textrm{Min.}} &\hspace{1mm} &\underset{P \in \mathcal{P}_{\mathcal{W}_\phi^\tau}}{\max} \left\{ \mathbb{E}_P \left[ f(\bm{x}; \bm{\xi}) \right] \right\}. \label{Main}
\end{alignat}
Since each kernel is symmetric around its corresponding sample point 
due to Assumption \ref{assumption2}, we can express problem \eqref{Main} 
using the weight uncertainty set $\mathcal{W}_\phi^\tau$ as
\begin{alignat}{3}
  &\underset{\bm{x} \in \mathcal{X}}{\textrm{Min.}} &\hspace{1mm} &\underset{\bm{w} \in \mathcal{W}_\phi^\tau}{\max} \left\{ \sum_{i=1}^n w_i f(\bm{x}; \hat{\bm{\xi}}_i) \right\}. \label{Main_weight}
\end{alignat}

\subsection{Reduction to a single-stage optimization problem} \label{2.2}
For $\tau=0$, we have $\mathcal{W}_{\phi}^{\tau} = \{ \bm{w}^0 \}$, 
thereby problem \eqref{Main_weight} can be expressed as
\begin{align*}
  \underset{\bm{x} \in \mathcal{X}}{\textrm{Min.}}\quad \displaystyle \sum_{i=1}^n w_i^0 f(\bm{x}; \bm{\xi}).
\end{align*} 

For $\tau > 0$, we assume that $\tau$ is sufficiently small such that $\bm{w} \in \mathcal{W}_{\phi}^{\tau}$ is satisfied. 
It is shown in \cite{LYY22} that 
problem \eqref{Main_weight} can be reduced to a single-stage optimization problem. 
The Lagrangian $L(\bm{x}; \,\cdot\,, \,\cdot\,, \,\cdot\,): \mathbb{R}^n \times \mathbb{R} \times \mathbb{R} \to \mathbb{R}$ for the inner maximization problem of \eqref{Main_weight} is given by
\begin{align*}
  L(\bm{x}; \bm{w}, \lambda, \eta) 
  &= \sum_{i=1}^n w_i f(\bm{x}; \hat{\bm{\xi}}_i) + \lambda \left( \tau - \sum_{i=1}^n  w_i^0 \phi \left( \frac{w_i}{w_i^0} \right) \right) +\eta \left( 1 - \bm{1}^\top \bm{w} \right) \\
  &= \tau \lambda + \eta + \sum_{i=1}^n \left[ w_i \left( f(\bm{x}; \hat{\bm{\xi}}_i) - \eta \right) - \lambda w_i^0 \phi \left( \frac{w_i}{w_i^0} \right) \right],
\end{align*}
where $\lambda \in \mathbb{R}_+$ and $\eta \in \mathbb{R}$ are Lagrange 
multipliers. Therefore, the objective function of the dual problem, 
denoted by $d (\bm{x};\,\cdot\,,\,\cdot\,)$, is obtained as 
\begin{align*}
  d (\bm{x}; \lambda, \eta) 
  &:= \sup_{\bm{w} \geq \bm{0}} \left\{ L(\bm{x}; \bm{w}, \lambda, \eta) \right\} \\
  &= \tau \lambda + \eta + \sup_{\bm{w} \geq \bm{0}} \left\{\sum_{i=1}^n \left[ w_i \left( f(\bm{x}; \hat{\bm{\xi}}_i) - \eta \right) - \lambda w_i^0 \phi \left( \frac{w_i}{w_i^0} \right) \right] \right\} \\
  &= \tau \lambda + \eta + \sum_{i=1}^n \sup_{w_i \geq 0} \left\{ w_i \left( f(\bm{x}; \hat{\bm{\xi}}_i) - \eta \right) - \lambda w_i^0 \phi \left( \frac{w_i}{w_i^0} \right) \right\} \\
  &= \tau \lambda + \eta + \lambda \sum_{i=1}^n w_i^0 \sup_{t_i \in \mathbb{R}} \left\{ \frac{t_i}{\lambda} \left( f(\bm{x}; \hat{\bm{\xi}}_i) - \eta \right) - \phi (t_i) \right\} \\
  &= \tau \lambda + \eta + \lambda \sum_{i=1}^n w_i^0 \phi^* \left( \frac{f(\bm{x}; \hat{\bm{\xi}}_i) - \eta}{\lambda} \right).
\end{align*}
Since $\mathcal{W}_\phi^\tau$ is a convex set, the inner maximization problem of \eqref{Main_weight} is a convex optimization problem. 
Moreover, since $\bm{w}^0 \in \mathcal{W}$ and $I_\phi \left( \bm{w}^0, 
\bm{w}^0 \right)=0 < \tau$, $\bm{w}^0$ is a relative interior point of 
$\mathcal{W}_\phi^\tau$. Therefore, $\mathcal{W}_\phi^\tau$ satisfies 
the Slater constraint qualification for convex optimization, which 
ensures the strong duality \cite[Proposition 8.7]{CE14}. 
Accordingly, we have 
\begin{align*}
  \underset{\bm{w} \in \mathcal{W}_\phi^\tau}{\max} \left\{ \sum_{i=1}^n w_i f(\bm{x}; \hat{\bm{\xi}}_i) \right\}
  = \underset{\lambda \geq 0, \eta \in \mathbb{R}}{\min} \{ d(\bm{x}; \lambda, \eta) \}.
\end{align*}
Consequently, problem \eqref{Main_weight} reduces to the single-stage optimization problem
\[
\begin{array}{ll}
\textrm{Min.} & d(\bm{x}; \lambda, \eta)  \\
   \st & \bm{x} \in \mathcal{X},\quad \lambda \geq 0, 
\end{array}
\]
which can be expressed explicitly as
\[
\begin{array}{ll}
    \textrm{Min.} & \tau \lambda + \eta + \lambda \displaystyle \sum_{i=1}^n w_i^0 \phi^* \left( \frac{f(\bm{x}; \hat{\bm{\xi}}_i) - \eta}{\lambda} \right)  \\
   \st & \bm{x} \in \mathcal{X},\quad \lambda \geq 0, \label{one-step}
\end{array}
\]
where the optimization variables are $\bm{x} \in \mathbb{R}^m$, $\lambda 
\in \mathbb{R}$, and $\eta \in \mathbb{R}$.

Since we assumed $\phi(t) = +\infty$ for any $t<0$, we have 
$\phi^*(s) = \underset{t \geq 0}{\sup} \{st - \phi(t)\}$, 
which is the pointwise supremum of affine functions with non-negative 
slopes. This shows that $\phi^*$ is a non-decreasing function. Therefore, 
for each $i=1,\dots,n$,
\begin{align*}
  \iota_i \geq f(\bm{x}; \hat{\bm{\xi}}_i) - \eta
\end{align*}
implies
\begin{align*}
  \phi^* \left( \frac{\iota_{i}}{\lambda} \right) \geq \phi^* \left( \frac{f(\bm{x}; \hat{\bm{\xi}}_i) - \eta}{\lambda} \right).
\end{align*}
Thus, problem \eqref{one-step} can be rewritten as
\begin{alignat*}{3}
  &\textrm{Min.} &\hspace{1mm} & \tau \lambda + \eta + \lambda \sum_{i=1}^n w_i^0 \phi^* \left( \frac{\iota_{i}}{\lambda} \right) \\
  &\st &\quad & \iota_{i} \geq f(\bm{x}; \hat{\bm{\xi}}_i) - \eta,\quad i=1,\dots,n, \\
  & &\quad &\bm{x} \in \mathcal{X},\quad \lambda \geq 0, 
\end{alignat*}
where the optimization variables are $\bm{x} \in \mathbb{R}^m$, $\lambda \in \mathbb{R}$, $\eta \in \mathbb{R}$, and $\bm{\iota} \in \mathbb{R}^n$.

\section{Formulation of the bi-objective optimization problem for truss structures} \label{3}
In this section, we summarize the compliance minimization problem, which is one of the topology optimization problems for truss structures. Subsequently, using the distributionally robust optimization based on kernel density estimation prepared in Section \ref{2}, we formulate a bi-objective optimization problem for the worst-case expected value and the worst-case Conditional Value-at-Risk of compliance of truss structures.

\subsection{Compliance minimization problem of trusses} \label{3.1}
Let $m$ denote the number of members of the truss. 
The design variable is the vector of member cross-sectional areas, 
denoted by $\bm{x} \in \mathcal{X} \subseteq \mathbb{R}^m$, and 
$\mathcal{X}$ is the admissible set of $\bm{x}$ defined by 
\begin{align}
  \mathcal{X} = \{ \bm{x} \in \mathbb{R}^m \mid \bm{x} \geq \bm{0},\ \bm{l}^\top \bm{x} \leq \overline{V} \}, \label{X}
\end{align}
where $\bm{l} \in \mathbb{R}_{++}^m$ is the vector of member lengths, 
and $\overline{V} > 0$ is the upper limit on the total structural volume 
of the truss. 

Let $d$ denote the number of degrees of freedom of the nodal 
displacements. 
We use $K(\bm{x}) \in \mathcal{S}_+^d$ to denote the stiffness matrix, 
which can be expressed as 
\begin{align}
  K(\bm{x})
  &= \sum_{j=1}^{m} \frac{Ex_j}{l_j} \bm{\beta}_j \bm{\beta}_j^\top, \label{decomp}
\end{align}
where $\bm{\beta}_1, \dots, \bm{\beta}_m \in \mathbb{R}^d$ are constant 
vectors, and $E$ is the Young modulus. 
The compliance is a measure of the static flexibility of a structure, 
and is defined by 
\begin{align}
\pi^\textrm{c} (\bm{x}; \bm{\xi}) = \sup_{\bm{u} \in \mathbb{R}^d} \left\{ 2 \bm{\xi}^\top \bm{u} - {\bm{u}}^\top K(\bm{x}) \bm{u} \right\},\label{compliance}
\end{align}
where $\bm{u} \in \mathbb{R}^d$ is the nodal displacement vector, and 
$\bm{\xi} \in \mathbb{R}^d$ is the random variable vector representing 
the static external load. 

As formally stated in Proposition \ref{Compliance_can_be_SOCP}, 
it is known that the compliance minimization problem of trusses can be 
reduced to a second-order cone programming problem. 

\begin{proposition} \textup{[\cite{BN01}; see also \cite{MBK09}]} \label{Compliance_can_be_SOCP}
  It is equivalent that $\bm{x}^*$ is the optimal value of the compliance minimization problem for trusses: 
\[
\begin{array}{lll}
    &\textup{Min.} &\pi^\textup{c} (\bm{x}; \bm{\xi}) \\
    &\st &\bm{x} \in \mathcal{X}
\end{array}
\]
and there exists variables $\bm{b} \in \mathbb{R}^m, \bm{q} \in \mathbb{R}^m$ such that $( \bm{b}^*, \bm{q}^*, \bm{x}^* )$ is the optimal value of the problem
\begin{equation}
    \begin{array}{ll}
    \textup{Min.} & \displaystyle \sum_{j=1}^m 2b_j \\
    \st &  b_j + x_j \geq 
    \left\| 
    \begin{bmatrix}
        b_j - x_j \\
        \sqrt{2l_j/E} q_j
    \end{bmatrix}
    \right\|, \quad j=1,\dots,m, \\
    & \displaystyle \sum_{j=1}^m q_j \bm{\beta}_j = \bm{\xi}, \\
    & \bm{x} \in \mathcal{X}, \label{compliance minimization is SOCP}
    \end{array}
\end{equation}
where the optimization variables are $\bm{x} \in \mathcal{X}, \bm{b} \in \mathbb{R}^m, \bm{q} \in \mathbb{R}^m$.
\end{proposition}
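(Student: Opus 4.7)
The plan is to establish the complementary energy principle via Lagrangian duality and then lift the resulting hyperbolic terms into rotated second-order cones. First, I would process the inner supremum in the definition \eqref{compliance} of compliance. Using the rank-one decomposition \eqref{decomp}, I would introduce member elongations $e_j := \bm{\beta}_j^\top \bm{u}$ as equality-constrained auxiliaries and attach Lagrange multipliers $q_j$ (the member axial forces) to these couplings. Since the inner primal is a concave quadratic maximization in $\bm{u}$ over $\mathbb{R}^d$ with affine equalities, strong duality holds, and exchanging the sup and inf eliminates both $\bm{u}$ and $\bm{e}$: the sup over $\bm{u}$ is finite only when the equilibrium condition $\sum_j q_j \bm{\beta}_j = \bm{\xi}$ is satisfied, and the sup over each $e_j$ produces the quadratic-over-linear term $l_j q_j^2/(E x_j)$. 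This yields the classical complementary energy identity
\begin{equation*}
  \pi^{\mathrm{c}}(\bm{x}; \bm{\xi})
  \;=\; \inf_{\bm{q} \in \mathbb{R}^m} \left\{ \sum_{j=1}^{m} \frac{l_j q_j^2}{E x_j} \;\middle|\; \sum_{j=1}^m q_j \bm{\beta}_j = \bm{\xi} \right\},
\end{equation*}
under the convention that the $j$-th summand is $0$ when $q_j = x_j = 0$ and $+\infty$ when $q_j \neq 0 = x_j$. Substituting into the outer minimization and merging the two infima, the compliance minimization becomes a joint minimization of $\sum_j l_j q_j^2/(E x_j)$ over $(\bm{x}, \bm{q})$ with $\bm{x} \in \mathcal{X}$ and the equilibrium equation as constraints.

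Next, to convert this into SOCP form, I would lift each hyperbolic term by introducing auxiliary variables $b_j$ enforcing $2 b_j \geq l_j q_j^2/(E x_j)$, and minimize $\sum_j 2 b_j$; tightness of the lift is immediate because each $b_j$ appears in exactly one constraint and in the objective. Rewriting the lift as $2 E x_j b_j \geq l_j q_j^2$ and applying the standard identity $4ac = (a+c)^2 - (a-c)^2$, one obtains the equivalent rotated cone inequality
\begin{equation*}
  b_j + x_j \;\geq\; \left\| \begin{bmatrix} b_j - x_j \\ \sqrt{2 l_j/E}\, q_j \end{bmatrix} \right\|,
\end{equation*}
which is exactly the constraint appearing in \eqref{compliance minimization is SOCP}. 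The requisite non-negativity $b_j \geq 0$ needed for the equivalence is automatically enforced by the norm inequality together with $x_j \geq 0$, so no extra sign constraint is required. Combining the equivalence pointwise in $\bm{x}$ then gives the claimed correspondence between optimal $\bm{x}^*$ of the compliance minimization and optimal $(\bm{b}^*, \bm{q}^*, \bm{x}^*)$ of the SOCP.

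The main obstacle is the degenerate case $x_j = 0$: the expression $l_j q_j^2/(E x_j)$ is undefined there, so both the sup-inf exchange and the equivalence of the hyperbolic and rotated cone inequalities need to be justified at the boundary. The resolution is that the rotated cone formulation precisely realizes the lower semicontinuous closure of $(x_j, q_j) \mapsto q_j^2/x_j$: with $x_j = 0$, the constraint collapses to $b_j \geq \|(b_j, \sqrt{2l_j/E}\, q_j)\|$, forcing $q_j = 0$ and aligning with the extended-value convention adopted in the complementary energy identity. Once this boundary issue is handled, the derivation is essentially the standard SOCP reformulation of a quadratic-over-linear objective under linear constraints, and no further non-routine step is involved.
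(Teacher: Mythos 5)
The paper states this proposition without proof, citing it as a known result from \cite{BN01} and \cite{MBK09}, so there is no in-paper argument to compare against; your derivation is the standard one used in those references, namely the complementary energy identity obtained by dualizing the inner supremum over $\bm{u}$ (and the elongations $e_j=\bm{\beta}_j^\top\bm{u}$) followed by the rotated second-order-cone lifting of each term $l_jq_j^2/(Ex_j)$ via $4x_jb_j=(b_j+x_j)^2-(b_j-x_j)^2$. The argument is correct, including the treatment of the degenerate case $x_j=0$, where the cone constraint forces $q_j=0$ and thus realizes the lower semicontinuous closure of the quadratic-over-linear function consistently with the convention in the complementary energy principle.
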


\subsection{Distributionally Robust bi-objective optimization problem with expected value and CVaR} \label{3.2}
For a given design variable vector $\bm{x} \in \mathbb{R}^m$, 
confidence level $\gamma \in [0, 1)$, and the distribution $P$, the {\em value-at-risk\/} (VaR), introduced by \cite{VaR}, of $\pi^\textrm{c}(\bm{x}; \bm{\xi})$ is defined by
\begin{align}
\text{{VaR}}_P^\gamma (\pi^\textrm{c}(\bm{x}; \bm{\xi})) = \min \left\{\alpha \in \mathbb{R} \mid \mathbb{P} \{ \pi^\textrm{c}(\bm{x}; \bm{\xi}) \leq \alpha \} \geq \gamma \right\}. \label{VaR}
\end{align}
Moreover, the {\em conditional value-at-risk\/} (CVaR), introduced by 
Rockafellar and Uryasev \cite{RU00}, is defined by the expected value of 
the tail loss exceeding the VaR, i.e., 
\begin{align}
\text{{CVaR}}_P^\gamma (\pi^\textrm{c}(\bm{x}; \bm{\xi})) = \text{{VaR}}_P^\gamma (\pi^\textrm{c}(\bm{x}; \bm{\xi})) + \frac{1}{1-\gamma}  \mathbb{E}_P \left[ {[\pi^\textrm{c}(\bm{x}; \bm{\xi}) - {\text{VaR}}_P^\gamma (\pi^\textrm{c}(\bm{x}; \bm{\xi}))]}^+ \right]. \label{CVaR}
\end{align}
It is known that 
CVaR is the tightest convex upper bound of VaR \cite{NS06}, and satisfies the property of coherence, which is a desirable characteristic for a risk measure \cite{RU02}. 

Consider the following bi-objective distributionally robust optimization problem of the worst-case expected value and the worst-case CVaR of compliance:
\begin{align}
    \underset{\bm{x} \in \mathcal{X}}{\text{Min.}} \quad 
    \bigg( \max \left\{ \mathbb{E}_P \left[ \pi^{\text{c}}(\bm{x}; \bm{\xi}) \right] \, \middle| \, P \in \mathcal{P}_{\mathcal{W}_\phi^\tau} \right\}, 
    \max \left\{ \text{CVaR}_P^\gamma (\pi^{\text{c}}(\bm{x}; \bm{\xi})) \, \middle| \, P \in \mathcal{P}_{\mathcal{W}_\phi^\tau} \right\} \bigg).
\end{align}
To find the Pareto solutions of this problem, we formulate the optimization problem of the worst-case expected value minimization under a worst-case CVaR constraint using the $\varepsilon$-constraint method 
as follows: 
\begin{equation}
  \begin{array}{lll}
    & \underset{\bm{x} \in \mathcal{X}}{\textrm{Min.}} & \underset{P \in \mathcal{P}_{\mathcal{W}_\phi^\tau}}{\max} \left\{ \mathbb{E}_P \left[ \pi^{\text{c}}(\bm{x}; \bm{\xi}) \right] \right\} \\
    & \st & \underset{P \in \mathcal{P}_{\mathcal{W}_\phi^\tau}}{\max} \left\{ \text{{CVaR}}_P^\gamma (\pi^\textrm{c}(\bm{x}; \bm{\xi})) \right\} \leq \nu.
  \end{array} \label{ECVaR}
\end{equation}
Here, $\nu \in \mathbb{R}$ is a parameter for the $\varepsilon$-constraint method. 

Define $F_P^\gamma: \mathbb{R}^m \times \mathbb{R} \to \mathbb{R}$ by 
\begin{align}
  F_P^\gamma (\bm{x}, \alpha) = \alpha + \frac{1}{1-\gamma} \mathbb{E}_P 
  \left[ {\left[ \pi^\textrm{c}(\bm{x}; \bm{\xi}) - \alpha \right]}^+ 
  \right], \label{F_P^gamma}
\end{align}
Rockafellar and Uryasev \cite[Theorem~2]{RU00} show that 
CVaR in \eqref{CVaR} can be expressed as
\begin{align*}
\textup{CVaR}_P^\gamma (\pi^\textrm{c}(\bm{x}; \bm{\xi})) = \underset{\alpha \in \mathbb{R}}{\min} \left\{ F_P^\gamma (\bm{x}, \alpha) \right\}.
\end{align*}
Therefore, problem \eqref{ECVaR} can be rewritten equivalently as
\begin{equation}
  \begin{array}{lll}
    & \underset{\bm{x} \in \mathcal{X}, \alpha \in \mathbb{R}}{\textrm{Min.}} & \underset{P \in \mathcal{P}_{\mathcal{W}_\phi^\tau}}{\max} \left\{ \mathbb{E}_P \left[ \pi^\textrm{c}(\bm{x}; \bm{\xi}) \right] \right\} \\
    & \st & \underset{P \in \mathcal{P}_{\mathcal{W}_\phi^\tau}}{\max} \left\{ F_P^\gamma (\bm{x}, \alpha) \right\} \leq \nu.
  \end{array} \label{Original}
\end{equation}

Given a set of samples $\{ \hat{\bm{\xi}}_1, \dots, \hat{\bm{\xi}}_n \}$ 
of random variable $\bm{\xi}$ and a set of the corresponding samples of 
the compliance $\{ \pi^\textrm{c}(\bm{x}; \hat{\bm{\xi}}_1), \dots, \pi^\textrm{c}(\bm{x}; \hat{\bm{\xi}}_n) \}$, $F_P^\gamma (\bm{x}, \alpha)$ in \eqref{F_P^gamma} can be approximated using the weighted kernel density estimator \eqref{WKDE} as \cite{LYY22}
\begin{align}
  F_P^\gamma (\bm{x}, \alpha) 
  &= \alpha + \frac{1}{1-\gamma} \sum_{i=1}^n w_i h \psi_k \left( \frac{\pi^\textrm{c}(\bm{x}; \hat{\bm{\xi}}_i) - \alpha}{h} \right). \label{F_P^gamma_approx}
\end{align}
Here, $\psi_k : \mathbb{R} \to \mathbb{R}$ is defined by 
\begin{align}
  \psi_k(c)&=cG_k(c)-\tilde{G}_k(c) \label{psi_K}
\end{align}
with 
\begin{align}
  G_k(c)&:=\int_{-\infty}^c k(y)\text{d}y, \label{G_k} \\
  \tilde{G}_k(c)&:=\int_{-\infty}^c yk(y)\text{d}y, \label{tildeG_k}
\end{align}
where $k$ in \eqref{G_k} and \eqref{tildeG_k} is the kernel function, 
and $h\in\mathbb{R}_{++}$ and $\bm{w} \in \mathcal{W}$ in 
\eqref{F_P^gamma_approx} are the bandwidth and weight vector of the kernel 
function, respectively, as introduced in \eqref{WKDE}. 
Define $\Upsilon_k: \mathbb{R} \to \mathbb{R}$ by 
\begin{align}
  \Upsilon_k(c)
    & = h\psi_k(h^{-1}c) \label{Upsilon} \\
    &= c \int_{-\infty}^\frac{c}{h} k(y)\text{d}y - h\int_{-\infty}^\frac{c}{h} yk(y)\text{d}y, \label{Upsilon_decomp}
\end{align}
where the last equality follows from \eqref{psi_K}, \eqref{G_k}, and 
\eqref{tildeG_k}. 
The following proposition is known to hold for $\Upsilon_k$.
\begin{proposition} \textup{\cite[Propositoin~1]{LYY22}} \label{Upsilon_Prop} \\
Under Assumptions \ref{assumption1} and \ref{assumption2}, $\Upsilon_k$ 
  defined by \eqref{Upsilon} satisfies the following properties: \\
  $(a)$ For any $c \in \mathbb{R}$, $\frac{\textup{d}}{\textup{d}c} \Upsilon_k(c) \geq 0$; \\
  $(b)$ $\Upsilon_k$ is a convex function.
\end{proposition}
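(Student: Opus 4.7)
The plan is to prove both parts by a direct differentiation of the closed form for $\Upsilon_k$ given in \eqref{Upsilon_decomp}, exploiting the nonnegativity of $k$ inherited from Assumption \ref{assumption1}.

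First I would work from the expression
$\Upsilon_k(c) = c\, G_k(c/h) - h\, \tilde{G}_k(c/h)$,
where $G_k$ and $\tilde G_k$ are the antiderivatives defined in \eqref{G_k} and \eqref{tildeG_k}. Since Assumption \ref{assumption1} ensures that $k$ is bounded and nonnegative with $\int k = 1$, both $G_k$ and $\tilde G_k$ are well-defined and continuously differentiable with $G_k'(t) = k(t)$ and $\tilde G_k'(t) = t\,k(t)$. Applying the product rule and the chain rule to $\Upsilon_k$ would give
\[
\frac{\mathrm{d}}{\mathrm{d}c}\Upsilon_k(c)
= G_k(c/h) + \frac{c}{h}\, k(c/h) - h\cdot \frac{1}{h}\cdot \frac{c}{h}\, k(c/h)
= G_k(c/h).
\]
The key observation is that the two $k(c/h)$ terms cancel exactly, leaving the clean identity $\Upsilon_k'(c) = G_k(c/h)$. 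Since $k \geq 0$, the cumulative quantity $G_k(c/h) \in [0,1]$ is nonnegative, which establishes part $(a)$.

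For part $(b)$ I would simply differentiate once more. Using $G_k'(t) = k(t)$ and the chain rule,
\[
\Upsilon_k''(c) = \frac{1}{h}\, k(c/h) \geq 0,
\]
where nonnegativity again follows from Assumption \ref{assumption1} together with $h>0$. Nonnegativity of the second derivative on all of $\mathbb{R}$ yields convexity of $\Upsilon_k$, proving $(b)$.

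There is no substantive obstacle here; the entire argument is a two-line computation once the representation \eqref{Upsilon_decomp} is in hand. The only care point worth flagging is the cancellation in the first derivative: the boundary-term contributions from differentiating the upper limit $c/h$ in both integrals combine to cancel the naive $c\,k(c/h)/h$ piece, and it is this cancellation that makes $\Upsilon_k'$ equal to the cumulative kernel $G_k(c/h)$ and thus also explains why $\Upsilon_k''$ reduces to a single rescaled evaluation of $k$ itself. Note that Assumption \ref{assumption2} is not invoked in this argument; only the nonnegativity of $k$ from Assumption \ref{assumption1} is actually used.
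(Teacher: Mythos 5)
Your computation is correct in substance, and it is worth noting that the paper itself offers no proof of this proposition at all: it is quoted verbatim from Liu \emph{et al.}\ \cite[Proposition~1]{LYY22}, so your argument is a self-contained derivation rather than an alternative to anything in the text. The identity $\frac{\mathrm{d}}{\mathrm{d}c}\Upsilon_k(c)=G_k(c/h)$ is the right key step, and the cleanest way to justify the cancellation you flag is to first combine the two integrals as $\Upsilon_k(c)=h\int_{-\infty}^{c/h}\bigl(c/h-y\bigr)k(y)\,\mathrm{d}y$; the boundary contribution then vanishes identically because the integrand is zero at $y=c/h$, and this avoids applying the product rule to $G_k$ and $\tilde G_k$ separately, which are not differentiable at discontinuity points of $k$. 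The one genuine (if minor) gap is in part $(b)$: writing $\Upsilon_k''(c)=h^{-1}k(c/h)$ presupposes that $k$ is continuous, which Assumption~\ref{assumption1} does not guarantee and which fails for the uniform kernel \eqref{Uniform_kernel} actually used later in the paper. The fix is immediate and does not need a second derivative: since $k\geq 0$, the function $G_k$ is non-decreasing, hence $\Upsilon_k'(c)=G_k(c/h)$ is non-decreasing in $c$, and a differentiable function with non-decreasing derivative is convex. Finally, your closing remark that Assumption~\ref{assumption2} is never used is slightly too strong: the finite second moment (together with $\int k=1$) is what ensures $\int_{-\infty}^{c}|y|k(y)\,\mathrm{d}y<\infty$, i.e.\ that $\tilde G_k$ and hence $\Upsilon_k$ are finite-valued in the first place, so that assumption is doing quiet work in making the differentiation legitimate.
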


Using the weight set $\mathcal{W}_\phi^\tau$ defined in \eqref{W_phi^tau}, problem \eqref{Original} is approximated by
\begin{equation}
  \begin{array}{ll}
    \underset{\bm{x} \in \mathcal{X}, \alpha \in \mathbb{R}}{\textrm{Min.}} & \underset{\bm{w} \in \mathcal{W}_{\phi}^{\tau}}{\max} \left\{ \displaystyle \sum_{i=1}^{n} w_{i} \pi^\textrm{c}(\bm{x}; \hat{\bm{\xi}}_i) \right\} \\
    \st & \underset{\bm{w} \in \mathcal{W}_{\phi}^{\tau}}{\max} \left\{ \displaystyle \sum_{i=1}^n w_{i} \Upsilon_{k}( \pi^\textrm{c}(\bm{x}; \hat{\bm{\xi}}_i) - \alpha ) \right\} \leq (1-\gamma)(\nu-\alpha).
  \end{array} \label{DRO_Compliance}
\end{equation}
Here, the objective value of problem \eqref{DRO_Compliance} is transformed through the same process in Section \ref{3.2}.

Through a process similar to that described in Section \ref{2.2}, problem \eqref{DRO_Compliance} can be transformed into a single-stage optimization problem as follows: 
\begin{equation}
\begin{array}{lll}
    & \textrm{Min.} & \tau \lambda_2 + \eta_2 + \lambda_2 \displaystyle \sum_{i=1}^n w_i^0 \phi^* \left( \frac{\iota_{2i}}{\lambda_2} \right) \\
    & \st & \iota_{2i} \geq \pi^\textrm{c}(\bm{x}; \hat{\bm{\xi}}_i) - \eta_2,\quad i=1,\dots,n, \\
    & & \tau \lambda_1 + \eta_1 + \lambda_1 \displaystyle \sum_{i=1}^n w_i^0 \phi^* \left( \frac{\iota_{1i}}{\lambda_1} \right) \leq (1-\gamma)(\nu-\alpha), \\
    & & \iota_{1i} \geq \Upsilon_k (\pi^\textrm{c}(\bm{x}; \hat{\bm{\xi}}_i) - \alpha) - \eta_1,\quad i=1,\dots,n, \\
    & & \bm{x} \in \mathcal{X},\quad \lambda_1 \geq 0,\quad \lambda_2 \geq 0,
\end{array} \label{Extension}
\end{equation}
where the optimization variables are $\bm{x} \in \mathbb{R}^m, \alpha \in \mathbb{R}, \lambda_1 \in \mathbb{R},  \lambda_2 \in \mathbb{R}, \eta_1 \in \mathbb{R}, \eta_2 \in \mathbb{R}, \bm{\iota}_1 \in \mathbb{R}^n,$ and $\bm{\iota}_2 \in \mathbb{R}^n$.

We can demonstrate the following proposition. 
\begin{proposition} \label{prop_convex}
If $\tau>0$ is sufficiently small so that $\mathcal{W}_{\phi}^{\tau} \subseteq \mathbb{R}_{++}^m$ is satisfied and Assumptions \ref{assumption1} and \ref{assumption2} hold, then problem \eqref{Extension} is a convex optimization problem.
\end{proposition}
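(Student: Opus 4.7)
The plan is to show that the feasible region is convex and the objective is convex by checking each functional constraint and the objective separately. Since $\mathcal{X}$ is polyhedral and the conditions $\lambda_1\geq 0,\lambda_2\geq 0$ are linear, all remaining work reduces to verifying convexity of a handful of expressions in the decision variables.

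First I would establish that for every fixed sample $\hat{\bm{\xi}}_i$ the compliance $\pi^{\textrm{c}}(\bm{x};\hat{\bm{\xi}}_i)$ is convex in $\bm{x}$. By the decomposition \eqref{decomp}, $K(\bm{x})$ is linear in $\bm{x}$, so the integrand $2\bm{\xi}^\top\bm{u}-\bm{u}^\top K(\bm{x})\bm{u}$ in \eqref{compliance} is affine in $\bm{x}$ for each fixed $\bm{u}$; the pointwise supremum of affine functions is convex. This immediately yields convexity of the constraint $\iota_{2i}\geq\pi^{\textrm{c}}(\bm{x};\hat{\bm{\xi}}_i)-\eta_2$.

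Next, for the constraint $\iota_{1i}\geq\Upsilon_k(\pi^{\textrm{c}}(\bm{x};\hat{\bm{\xi}}_i)-\alpha)-\eta_1$, I would invoke Proposition \ref{Upsilon_Prop}, which tells us that $\Upsilon_k$ is convex and non-decreasing. Since $\pi^{\textrm{c}}(\bm{x};\hat{\bm{\xi}}_i)-\alpha$ is convex in $(\bm{x},\alpha)$ by the previous step, the standard composition rule (a non-decreasing convex outer function composed with a convex inner function is convex) gives convexity of the right-hand side in $(\bm{x},\alpha,\eta_1)$.

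The step I expect to be the main obstacle concerns the perspective-type terms $\lambda_j\phi^*(\iota_{ji}/\lambda_j)$ that appear in both the objective and the left-hand side of the CVaR constraint. Since $\phi$ is proper convex (it is finite at $t=1$ and equals $+\infty$ for $t<0$), its conjugate $\phi^*$ is convex and lower semicontinuous on $\mathbb{R}$, so the perspective map $(\iota,\lambda)\mapsto\lambda\phi^*(\iota/\lambda)$ is jointly convex on $\mathbb{R}\times\mathbb{R}_{++}$ and admits a convex lower-semicontinuous extension to the closed half-space $\lambda\geq 0$; the hypothesis that $\tau$ is small enough to keep $\mathcal{W}_\phi^\tau\subseteq\mathbb{R}_{++}^n$ mirrors the Slater situation of Section \ref{2.2} and ensures the extension does not produce spurious infinite values at the optimum. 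Taking positive weighted sums with weights $w_i^0>0$ and adding the affine terms $\tau\lambda_j+\eta_j$ preserves convexity, so the objective is convex in $(\lambda_2,\eta_2,\bm{\iota}_2)$ while the CVaR inequality has a convex left-hand side and an affine right-hand side $(1-\gamma)(\nu-\alpha)$. Assembling these observations shows that \eqref{Extension} minimizes a convex function over a convex set, which is the claim of Proposition \ref{prop_convex}.
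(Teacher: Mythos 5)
Your proposal is correct, and it follows the same backbone as the paper's proof --- convexity of the compliance, then the ``non-decreasing convex outer $\circ$ convex inner'' composition rule applied to $\Upsilon_k(\pi^{\textrm{c}}(\bm{x};\hat{\bm{\xi}}_i)-\alpha)$ via Proposition \ref{Upsilon_Prop} --- but it differs in two substantive ways. First, where the paper imports convexity of $\pi^{\textrm{c}}(\,\cdot\,;\bm{\xi})$ as a cited lemma from Achtziger et al.\ \cite{ABBZ92}, you derive it directly from \eqref{compliance} and \eqref{decomp} as a pointwise supremum over $\bm{u}$ of functions affine in $\bm{x}$; this is more self-contained and arguably more transparent. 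Second, and more importantly, you explicitly treat the joint convexity of the perspective-type terms $(\iota,\lambda)\mapsto\lambda\phi^*(\iota/\lambda)$ appearing in the objective and in the CVaR constraint of \eqref{Extension}. The paper's proof only establishes that $\phi^*$ is convex as a function of a single argument (via Lemma \ref{Convex Conjugate}) and then declares the proof complete; convexity of $\phi^*$ alone does not give convexity of $\lambda\phi^*(\iota/\lambda)$ jointly in $(\iota,\lambda)$ --- one genuinely needs the perspective-function result for a closed proper convex function, which you correctly invoke. In that sense your argument closes a gap that the paper's own proof leaves implicit. The one point you could tighten is the role of the hypothesis $\mathcal{W}_\phi^\tau\subseteq\mathbb{R}_{++}^m$: it is used to guarantee that the reduction from \eqref{DRO_Compliance} to \eqref{Extension} via strong duality is valid (the Slater condition of Section \ref{2.2}), rather than being needed for the convexity of \eqref{Extension} itself; your phrasing (``mirrors the Slater situation'') gestures at this but could state it more plainly.
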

\begin{proof}
  See Section \ref{A.1}.
\end{proof}

\begin{remark}
Without consideration for uncertainty of the input distribution, distributionally robust CVaR constrained expected value minimization problem for the compliance of trusses \eqref{DRO_Compliance} is expressed as
\begin{equation}
\begin{array}{lll}
    & \underset{\bm{x} \in \mathcal{X}, \alpha \in \mathbb{R}}{\textrm{Min.}} & \displaystyle \sum_{i=1}^n w_i \pi^\textrm{c}(\bm{x}; \hat{\bm{\xi}}_i) \\
    & \st & \alpha + \displaystyle \frac{1}{1-\gamma} \sum_{i=1}^n w_i \Upsilon_k ( \pi^\textrm{c}(\bm{x}; \hat{\bm{\xi}}_i) - \alpha ) \leq \nu.
\end{array} \label{KDE-based approximation}
\end{equation}
\end{remark}

\subsection{Modified $\chi^2$ distance} \label{3.3}
In problem \eqref{Extension}, $\phi$ is the function that is used to 
define the $\phi$-divergence $I_\phi$ in \eqref{phi-divergence}. When 
using the modified $\chi^2$ distance as the $\phi$-divergence, we can derive the specific form of problem \eqref{Extension}.

The function $\phi$ corresponding to the modified $\chi^2$ distance is defined by
\begin{align*}
\phi (t) =
\begin{cases}
  {(t-1)}^2 & \text{if}\ t \geq 0, \\
  \infty & \text{if}\ t < 0.
\end{cases}
\end{align*}
The conjugate function of $\phi$ is given by
\begin{align}
  \phi^* (s) = \frac{1}{4} {({[s+2]}^+)}^2 -1, \label{modiefied_conjugate}
\end{align}
which satisfies the following proposition.
\begin{proposition} \label{nes_suf_condition}
$y, z \in \mathbb{R}$ satisfy $z \geq \phi^* (y)$ if and only if there 
  exists an $a \in \mathbb{R}$ satisfying 
  \begin{align*}
  z+1 
  \geq
  {\left\| 
  \begin{bmatrix}
   z - 1  \\
   a  \\
  \end{bmatrix}
  \right\|},\quad
  a \geq y + 2,\quad 
  a \geq 0.
  \end{align*}
\end{proposition}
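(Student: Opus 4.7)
The plan is to exploit the explicit form of $\phi^*$ given in \eqref{modiefied_conjugate} to reduce $z \ge \phi^*(y)$ to a scalar quadratic inequality, and then recognise the stated second-order cone inequality as its lifting through an auxiliary variable $a$. First I would rewrite $z \ge \phi^*(y)$ as the scalar inequality $4(z+1) \ge \bigl([y+2]^+\bigr)^2$, obtained by transposing the $-1$ and clearing the $\tfrac{1}{4}$. Next, observing that $\bigl([y+2]^+\bigr)^2 = \min\bigl\{a^2 : a \ge y+2,\ a \ge 0\bigr\}$, with the minimum attained at $a = [y+2]^+$ because $x \mapsto x^2$ is nondecreasing on $\mathbb{R}_+$, I would reformulate this as the existence of some $a \in \mathbb{R}$ satisfying the two linear constraints $a \ge y+2$ and $a \ge 0$ together with the quadratic bound $4(z+1) \ge a^2$.

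The remaining step is to verify that this quadratic bound is equivalent to the stated SOC inequality. I would square both sides of the SOC, observe that nonnegativity of the linear side of the SOC is automatic since the right-hand side of the squared inequality is at least $a^2 \ge 0$, and apply the elementary algebraic identity that cancels the quadratic-in-$z$ terms and leaves the desired scalar bound. This is a direct computation with no analytical content beyond expanding two squares.

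For the forward direction, taking $a := [y+2]^+$ satisfies both sign constraints by construction, and the quadratic bound at this choice of $a$ is precisely the hypothesis $z \ge \phi^*(y)$, so the SOC holds. For the converse, from $a \ge y+2$ and $a \ge 0$ we conclude $a \ge [y+2]^+$, hence $a^2 \ge \bigl([y+2]^+\bigr)^2$; combined with the SOC (equivalently, with the quadratic bound on $a^2$) this yields $4(z+1) \ge \bigl([y+2]^+\bigr)^2$, i.e.\ $z \ge \phi^*(y)$. The only real subtlety is the bookkeeping of the $[\,\cdot\,]^+$ operator in separating the cases $y \ge -2$ and $y < -2$, since the optimal auxiliary variable $a = [y+2]^+$ degenerates to $0$ in the second case and gives $\phi^*(y) = -1$ as expected; once this is handled, everything reduces to the algebraic identity relating the SOC to the univariate quadratic.
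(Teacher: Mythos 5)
Your reduction of $z \ge \phi^*(y)$ to the existence of $a$ with $a \ge y+2$, $a \ge 0$, and $4(z+1) \ge a^2$ is exactly the paper's route (the paper likewise writes $[y+2]^+ = \min\{a \mid a \ge y+2,\ a \ge 0\}$ and then lifts the square). The gap is precisely in the step you dismiss as ``a direct computation with no analytical content beyond expanding two squares'': that computation does not close. Squaring the displayed cone inequality gives $(z+1)^2 \ge (z-1)^2 + a^2$, i.e.\ $4z \ge a^2$, not $4(z+1) \ge a^2$; the two conditions differ by an additive constant, and the stated second-order cone constraint is strictly stronger than the quadratic bound you derived. Concretely, for $y=-1$ and $z=-3/4$ we have $z = \phi^*(y) = -3/4$, so the left-hand side of the claimed equivalence holds, yet no $a$ can satisfy
\begin{align*}
z+1 \;=\; \tfrac{1}{4} \;\ge\; \left\| \begin{bmatrix} z-1 \\ a \end{bmatrix} \right\| \;\ge\; |z-1| \;=\; \tfrac{7}{4}.
\end{align*}
The correct conic representation of $4(z+1)\cdot 1 \ge a^2$ is the rotated cone $z+2 \ge \bigl\| (z,\, a) \bigr\|$, obtained from $4uv \ge a^2$ with $u = z+1$, $v = 1$.

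To be fair, the paper's own proof commits the identical slip: it asserts that $z \ge a^2/4 - 1$ is equivalent to $z+1 \ge \|(z-1,\,a)\|$, whereas that cone encodes $z \ge a^2/4$. As printed, the proposition characterizes $z \ge \phi^*(y)+1$ rather than $z \ge \phi^*(y)$. The downstream formulation \eqref{Extension_Modified} is nevertheless consistent, because there the constant $-\lambda_2$ arising from $\lambda_2\phi^*(\iota_{2i}/\lambda_2)$ is absorbed into the $(\tau-1)\lambda_2$ term of the objective, so the variables $z_{2i}$ effectively bound ${([\iota_{2i}+2\lambda_2]^+)}^2/(4\lambda_2)$, for which the rotated-cone constraint $z_{2i}+\lambda_2 \ge \|(z_{2i}-\lambda_2,\, y_{2i})\|$ is correct. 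Had you expanded the two squares instead of deferring the computation, you would have caught the discrepancy; as written, your argument (like the paper's) fails at that final step, and the proposition itself needs the correction $z \ge \phi^*(y)+1$ (equivalently, replace the cone by $z+2 \ge \|(z,\,a)\|$).
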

\begin{proof}
  See Section \ref{A.2}.
\end{proof}

Application of Proposition \ref{nes_suf_condition} to problem \eqref{Extension} yields 
\begin{equation}
  \begin{array}{llll}
    & \textrm{Min.} & & (\tau-1) \lambda_2 + \eta_2 + \displaystyle \sum_{i=1}^n w_i^0 z_{2i} \\
    & \st & & z_{2i} + \lambda_2 \geq 
    {\left\| 
    \begin{bmatrix}
      z_{2i} - \lambda_2 \\
      y_{2i}
    \end{bmatrix}
    \right\|},\quad i=1,\dots,n, \\
    & & & y_{2i} \geq \iota_{2i} + 2\lambda_2,\quad i=1,\dots,n, \\
    & & & \iota_{2i} \geq \pi^\textrm{c}(\bm{x}; \hat{\bm{\xi}}_i) - \eta_2,\quad i=1,\dots,n, \\
    & & & (\tau - 1) \lambda_1 + \eta_1 + \displaystyle \sum_{i=1}^n w_i^0 z_{1i} \leq (1-\gamma)(\nu-\alpha), \\
    & & & z_{1i} + \lambda_1 \geq 
    {\left\| 
    \begin{bmatrix}
      z_{1i} - \lambda_1 \\
      y_{1i}
    \end{bmatrix}
    \right\|},\quad i=1,\dots,n, \\
    & & & y_{1i} \geq \iota_{1i} + 2\lambda_1,\quad i=1,\dots,n, \\
    & & & \iota_{1i} \geq \Upsilon_k (\pi^\textrm{c}(\bm{x}; \hat{\bm{\xi}}_i) - \alpha ) - \eta_1,\quad i=1,\dots,n, \\
    & & & \bm{x} \in \mathcal{X},\quad \lambda_1 \geq 0,\quad \lambda_2 \geq 0,\quad \bm{y}_1 \geq \bm{0},\quad \bm{y}_2 \geq \bm{0}.
  \end{array} \label{Extension_Modified}
\end{equation}
Here, the optimization variables are $\bm{x} \in \mathbb{R}^m, \alpha \in \mathbb{R}, \lambda_1 \in \mathbb{R}, \lambda_2 \in \mathbb{R}, \eta_1 \in \mathbb{R}, \eta_2 \in \mathbb{R}, \bm{\iota}_1 \in \mathbb{R}^n, \bm{\iota}_2 \in \mathbb{R}^n, \bm{y}_1 \in \mathbb{R}^n, \bm{y}_2 \in \mathbb{R}^n, \bm{z}_1 \in \mathbb{R}^n$, and $\bm{z}_2 \in \mathbb{R}^n$.

\section{Second-order cone programming formulation with uniform kernel} \label{4}
Problem \eqref{Extension} involves $\Upsilon_k$ defined by 
\eqref{Upsilon_decomp}, and is thereby difficult to solve directly. 
In this section and Section \ref{5}, we transform problem \eqref{Extension_Modified} into a more tractable form that does not involve $\Upsilon_k$.

In this section, we consider $\Upsilon_k$ corresponding to the uniform 
kernel, and show that the value of $\Upsilon_k$ can be expressed as the 
optimal value of a convex optimization problem. We then recast problem 
\eqref{Extension_Modified} into a tractable form.
The uniform kernel is defined as
\begin{align}
  k(y) =
\begin{cases}
  \frac{1}{2} & \textrm{if}\quad |y| \leq 1, \\
  0 & \textrm{otherwise}.
\end{cases} \label{Uniform_kernel}
\end{align}
When using this kernel, the following proposition shows that $\Upsilon_k(c)$ can be expressed as the optimal value of an optimization problem.
\begin{proposition} \label{Uniform_decomp}
  Let $\Upsilon_k$ be defined by \eqref{Upsilon_decomp} with 
  $k$ in \eqref{Uniform_kernel} and $h\in\mathbb{R}_{++}$. Then, for any $c \in \mathbb{R}$, 
  $\Upsilon_k(c)$ is equal to the optimal value of the following second-order cone programming problem:
\begin{equation}
\begin{array}{ll}
    \mathrm{Min.} & c_\rma + s \\
    \st & s + h \geq
    {\left\| 
    \begin{bmatrix}
        s - h \\
        c_\rmq
    \end{bmatrix}
    \right\|}, \\
    & c_\rma + c_\rmq \geq c + h, \\
    & c_\rma \geq 0,\quad 0 \leq c_\rmq \leq 2h,
\end{array} \label{Uniform Upsilon}
\end{equation}
where the optimization variables are $c_\rma \in \mathbb{R}$, $c_\rmq \in \mathbb{R}$, and $s \in \mathbb{R}$.
\end{proposition}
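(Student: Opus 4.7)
The plan is to compute $\Upsilon_k(c)$ explicitly under the uniform kernel, then reduce the SOCP \eqref{Uniform Upsilon} to a one-variable minimization, and finally match the two expressions by case analysis on $c$.

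\medskip\noindent\textbf{Step 1: Explicit formula for $\Upsilon_k(c)$.} Using \eqref{Upsilon_decomp} with $k$ as in \eqref{Uniform_kernel}, the integrals vanish when $c/h \leq -1$, equal $1$ and $0$ respectively when $c/h \geq 1$, and are elementary quadratic expressions in between. A direct evaluation yields
\begin{align*}
\Upsilon_k(c) = \begin{cases} 0 & \text{if } c \leq -h, \\ (c+h)^2/(4h) & \text{if } -h \leq c \leq h, \\ c & \text{if } c \geq h. \end{cases}
\end{align*}
Note that the three pieces agree at $c = \pm h$, so $\Upsilon_k$ is continuous (and in fact $C^1$), consistent with Proposition \ref{Upsilon_Prop}.

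\medskip\noindent\textbf{Step 2: Reduction of the SOCP.} The second-order cone constraint $s + h \geq \|(s - h,\, c_\rmq)^\top\|$ is equivalent, after squaring (which is valid because both sides can be assumed non-negative and $h > 0$), to $4sh \geq c_\rmq^2$ together with $s+h \geq 0$. Since $c_\rmq^2 \geq 0$ and $h > 0$, this forces $s \geq 0$ automatically, and the constraint collapses to $s \geq c_\rmq^2/(4h)$. At optimality we may therefore set $s = c_\rmq^2/(4h)$ and $c_\rma = \max\{0,\, c + h - c_\rmq\}$, so problem \eqref{Uniform Upsilon} becomes the one-dimensional problem
\begin{align*}
  \min_{0 \leq c_\rmq \leq 2h}\ \left\{\,\max\{0,\, c + h - c_\rmq\} + \frac{c_\rmq^2}{4h}\,\right\}.
\end{align*}

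\medskip\noindent\textbf{Step 3: Case analysis matching the optimal value with $\Upsilon_k(c)$.} Split into the three regimes dictated by the piecewise form of $\Upsilon_k$. If $c \leq -h$, then $c + h - c_\rmq \leq 0$ for all $c_\rmq \geq 0$, so the objective reduces to $c_\rmq^2/(4h)$, minimized at $c_\rmq = 0$ with value $0$. If $c \geq h$, then $c + h - c_\rmq \geq c - h \geq 0$ throughout $[0, 2h]$, so the objective is the smooth convex function $c + h - c_\rmq + c_\rmq^2/(4h)$, whose minimizer on $[0, 2h]$ is $c_\rmq = 2h$, giving value $c$. If $-h \leq c \leq h$, then $0 \leq c + h \leq 2h$, and inspecting the two pieces of the max separately shows that both the linear-plus-quadratic piece (on $c_\rmq \leq c+h$) and the pure quadratic piece (on $c_\rmq \geq c+h$) attain their minimum at $c_\rmq = c + h$, with common value $(c+h)^2/(4h)$. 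In each case the optimal value coincides with $\Upsilon_k(c)$ computed in Step 1, completing the proof.

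\medskip\noindent\textbf{Expected difficulty.} The only delicate point is Step 3, where the kink of the $\max$ splits the feasible interval; one must verify that the quadratic is decreasing beyond its unconstrained minimizer $c_\rmq = 2h$ and handle the three regimes consistently. Once the SOCP is collapsed to the one-variable problem, the remaining verification is routine piecewise calculation.
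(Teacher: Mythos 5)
Your proof is correct and follows essentially the same route as the paper: derive the explicit piecewise form of $\Upsilon_k$ under the uniform kernel, observe that the second-order cone constraint encodes $s \geq c_\rmq^2/(4h)$, and match the reduced problem's optimal value to the piecewise formula. The only difference is one of rigor in your favor — the paper asserts the equivalence between the piecewise function and the decomposed minimization by citing a standard additive-decomposition technique, whereas you verify it directly by the three-regime case analysis.
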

\begin{proof}
  See Section \ref{A.3}.
\end{proof}

Using Proposition \ref{Compliance_can_be_SOCP} and \ref{Uniform_decomp}, 
 we can transform problem \eqref{Extension_Modified} into the following 
 second-order cone programming problem: 
\begin{equation}
\begin{array}{ll}
    \textrm{Min.} & (\tau - 1) \lambda_2 + \eta_2 + \displaystyle \sum_{i=1}^n w_i^0 z_{2i} \\
    \st & z_{2i} + \lambda_2 \geq 
    {\left\| 
    \begin{bmatrix}
        z_{2i} - \lambda_2 \\
        y_{2i}
    \end{bmatrix}
    \right\|},\quad i=1,\dots,n, \\
    & y_{2i} \geq \iota_{2i} + 2\lambda_2,\quad i=1,\dots,n, \\
    & \iota_{2i} \geq \displaystyle \sum_{j=1}^m 2b_{ij} - \eta_2,\quad i=1,\dots,n, \\
    & (\tau - 1) \lambda_1 + \eta_1 + \displaystyle \sum_{i=1}^n w_i^0 z_{1i} \leq (1-\gamma)(\nu-\alpha), \\
    & z_{1i} + \lambda_1 \geq 
    {\left\| 
    \begin{bmatrix}
        z_{1i} - \lambda_1 \\
        y_{1i}
    \end{bmatrix}
    \right\|},\quad i=1,\dots,n, \\
    & y_{1i} \geq \iota_{1i} + 2\lambda_1,\quad i=1,\dots,n, \\
    & \iota_{1i} \geq c_{\rma i} + s_i - \eta_1,\quad i=1,\dots,n, \\
    & s_i + h \geq
    {\left\| 
    \begin{bmatrix}
        s_i - h \\
        c_{\rmq i}
    \end{bmatrix}
    \right\|},\quad i=1,\dots,n, \\
    & c_{\rma i} + c_{\rmq i} \geq \displaystyle \sum_{j=1}^m 2b_{ij} - \alpha + h,\quad i=1,\dots,n, \\
    & b_{ij} + x_j \geq 
    {\left\| 
    \begin{bmatrix}
        b_{ij} - x_j \\
        \sqrt{2l_{j}/E} q_{ij}
    \end{bmatrix}
    \right\|},\quad i=1,\dots,n,\ j=1,\dots,m, \\
    & \displaystyle \sum_{j=1}^m q_{ij} \bm{\beta}_{j} = \hat{\bm{\xi}}_i,\quad i=1,\dots,n, \\
    & \bm{x} \in \mathcal{X},\quad \lambda_1 \geq 0,\quad \lambda_2 \geq 0,\quad \bm{y}_1 \geq \bm{0},\quad \bm{y}_2 \geq \bm{0}, \\
    & \bm{0} \leq \bm{c}_\rmq \leq 2h \bm{1},\quad \bm{c}_\rma \geq \bm{0},
\end{array} \label{Extension_Modified_Uniform_Compliance}
\end{equation}
where the optimization variables are $\bm{x} \in \mathbb{R}^m$, $\alpha \in \mathbb{R}$, $\lambda_1 \in \mathbb{R}$, $\lambda_2 \in \mathbb{R}$, $\eta_1 \in \mathbb{R}$, $\eta_2 \in \mathbb{R}$, $\bm{\iota}_1 \in \mathbb{R}^n$, $\bm{\iota}_2 \in \mathbb{R}^n$, $\bm{y}_1 \in \mathbb{R}^n$, $\bm{y}_2 \in \mathbb{R}^n$, $\bm{z}_1 \in \mathbb{R}^n$, $\bm{z}_2 \in \mathbb{R}^n$, $\bm{c}_\rmq \in \mathbb{R}^n$, $\bm{c}_\rma \in \mathbb{R}^n$, $\bm{s} \in \mathbb{R}^n$, $\bm{b}_1, \dots, \bm{b}_n \in \mathbb{R}^m$, and $\bm{q}_1, \dots, \bm{q}_n \in \mathbb{R}^m$. 
It is worth noting that this problem can be solved more efficiently 
with a primal-dual interior point method \cite{AL12}.

\begin{remark}
When $\tau=0$, problem \eqref{Extension_Modified} can be expressed as the 
  second-order cone programming problem:
\begin{equation}
\begin{array}{ll}
    \mathrm{Min.} & \displaystyle \sum_{i=1}^n w_i^0  \pi^\mathrm{c}(\bm{x}; \hat{\bm{\xi}}_i) \\
    \st & \displaystyle \sum_{i=1}^n w_i^0 \left( c_{\rma i} + s_i \right) \leq (1-\gamma) (\nu - \alpha), \\
    & s_{i} + h \geq 
    {\left\| 
    \begin{bmatrix}
        s_{i} - h \\
        c_{\rmq i}
    \end{bmatrix}
    \right\|},\quad i=1,\dots,n, \\
    & c_{\rma i} + c_{\rmq i} \geq \displaystyle \sum_{j=1}^m 2b_{ij} - \alpha + h,\quad i=1,\dots,n, \\
    & b_{ij} + x_j \geq 
    {\left\| 
    \begin{bmatrix}
        b_{ij} - x_j \\
        \sqrt{2l_{j}/E} q_{ij}
    \end{bmatrix}
    \right\|},\quad i=1,\dots,n,\ j=1,\dots,m, \\
    & \displaystyle \sum_{j=1}^m q_{ij} \bm{\beta}_{j} = \hat{\bm{\xi}}_i,\quad i=1,\dots,n, \\
    & \bm{x} \in \mathcal{X},\quad \bm{c}_\rma \geq \bm{0},\quad \bm{0} \leq \bm{c}_\rmq \leq 2h\bm{1},
\end{array} \label{Extension_tau0_Uniform_Compliance}
\end{equation}
where the optimization variables are $\bm{x} \in \mathbb{R}^m$, $\alpha \in \mathbb{R}$, $\bm{c}_\rma \in \mathbb{R}^n$, $\bm{c}_\rmq \in \mathbb{R}^n$, $\bm{s} \in \mathbb{R}^n$, $\bm{b}_1 \dots, \bm{b}_n \in \mathbb{R}^m$, and $\bm{q}_1, \dots, \bm{q}_n \in \mathbb{R}^m$.
\end{remark}

\section{Second-order cone programming formulation with triangular kernel} \label{5}
In this section, we consider $\Upsilon_k$ corresponding to the 
triangular kernel, and express its value as the optimal value of a 
convex optimization problem. We then recast problem 
\eqref{Extension_Modified} into a tractable form.

The triangular kernel is defined by
\begin{align}
  k(y) =
\begin{cases}
  y+1 & \textrm{if}\quad {-1} \leq y \leq 0, \\
  y-1 & \textrm{if}\quad 0 \leq y \leq 1, \\
  0 & \textrm{otherwise}.
\end{cases} \label{Triangular_kernel}
\end{align} 
\begin{proposition} \label{Triangular_decomp}
  Let $\Upsilon_k$ be defined by \eqref{Upsilon_decomp} with 
  $k$ in \eqref{Triangular_kernel} and $h\in\mathbb{R}_{++}$. 
  Then, for any $c \in \mathbb{R}$, $\Upsilon_k(c)$ 
  is equal to the optimal value of the second-order cone programming problem:
\begin{equation}
\begin{array}{ll}
    \mathrm{Min.} & c_\rma + \displaystyle \frac{s_1 + s_2}{6h^2} + s_3 \\
    \st & s_1 + c_{\mathrm{c1}} \geq
    {\left\| 
    \begin{bmatrix}
        s_1 - c_{\mathrm{c1}} \\
        2r_1
    \end{bmatrix}
    \right\|},\quad 
    r_1 + \displaystyle \frac{1}{4} \geq
    {\left\| 
    \begin{bmatrix}
        r_1 - \frac{1}{4} \\
        c_{\mathrm{c1}}
    \end{bmatrix}
    \right\|}, \\ \\
    & s_2 + c_{\mathrm{c2}} \geq
    {\left\| 
    \begin{bmatrix}
        s_2 - c_{\mathrm{c2}} \\
        2r_2
    \end{bmatrix}
    \right\|},\quad 
    r_2 + \displaystyle \frac{1}{4} \geq
    {\left\| 
    \begin{bmatrix}
        r_2 - \frac{1}{4} \\
        c_{\mathrm{c2}}
    \end{bmatrix}
    \right\|}, \\ \\
    & s_3 + c_{\mathrm{c2}} \geq \displaystyle \frac{5h}{6}, \\
    & c_\rma + c_{\mathrm{c1}} - c_{\mathrm{c2}} \geq c, \\
    & 0 \leq c_{\mathrm{c1}} \leq h,\quad 0 \leq c_{\mathrm{c2}} \leq h,\quad c_\rma \geq 0,
\end{array} \label{Triangular Upsilon}
\end{equation}
where the optimization variables are $c_{\mathrm{c1}} \in \mathbb{R}$, 
  $c_{\mathrm{c2}} \in \mathbb{R}$, $c_\rma \in \mathbb{R}$, $s_1 \in 
  \mathbb{R}$, $s_2 \in \mathbb{R}$, $s_3 \in \mathbb{R}$, $r_1 \in 
  \mathbb{R}$, and $r_2 \in \mathbb{R}$.
\end{proposition}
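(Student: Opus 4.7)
The plan is to verify that for each $c \in \mathbb{R}$, the optimal value of the SOCP \eqref{Triangular Upsilon} equals $\Upsilon_k(c)$ computed directly from \eqref{Upsilon_decomp} with the triangular kernel.

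First I would simplify the SOCP. Squaring the rotated second-order cone constraint $r_i + \tfrac{1}{4} \geq \|(r_i - \tfrac{1}{4}, c_{\mathrm{c}i})\|$ gives $r_i \geq c_{\mathrm{c}i}^{2}$, and squaring $s_i + c_{\mathrm{c}i} \geq \|(s_i - c_{\mathrm{c}i}, 2r_i)\|$ gives $s_i c_{\mathrm{c}i} \geq r_i^{2}$. Together with $s_i, c_{\mathrm{c}i} \geq 0$ these enforce $s_i \geq c_{\mathrm{c}i}^{3}$, and the reverse choice $r_i = c_{\mathrm{c}i}^{2}$, $s_i = c_{\mathrm{c}i}^{3}$ is feasible, so the SOCP constraints exactly encode $s_i \geq c_{\mathrm{c}i}^{3}$. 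Since $s_3$ enters only linearly with coefficient $1$ and the constraint $s_3 + c_{\mathrm{c}2} \geq 5h/6$, at optimum $s_3 = 5h/6 - c_{\mathrm{c}2}$. Thus \eqref{Triangular Upsilon} reduces to
\begin{equation*}
V(c) \;=\; \min \left\{ c_\rma + \frac{c_{\mathrm{c}1}^{3} + c_{\mathrm{c}2}^{3}}{6h^{2}} + \frac{5h}{6} - c_{\mathrm{c}2} \;:\; c_\rma \geq 0,\ 0 \leq c_{\mathrm{c}1}, c_{\mathrm{c}2} \leq h,\ c_\rma + c_{\mathrm{c}1} - c_{\mathrm{c}2} \geq c \right\}.
\end{equation*}
This is a convex program in three real variables.

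Second, I would compute $\Upsilon_k$ in closed form. Using $\Upsilon_k'(c) = G_k(c/h)$ together with the explicit triangular CDF $G_k(u) = \tfrac{1}{2} + u - \tfrac{1}{2} u|u|$ on $[-1,1]$, direct integration gives the piecewise-cubic expression $\Upsilon_k(c) = 0$ for $c \leq -h$, $\Upsilon_k(c) = (c+h)^{3}/(6h^{2})$ for $c \in [-h, 0]$, $\Upsilon_k(c) = c + (h-c)^{3}/(6h^{2})$ for $c \in [0, h]$, and $\Upsilon_k(c) = c$ for $c \geq h$.

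Third, I would establish the upper bound $V(c) \leq \Upsilon_k(c)$ by exhibiting an explicit primal-feasible point in each regime: $(c_\rma, c_{\mathrm{c}1}, c_{\mathrm{c}2}) = (0, 0, h)$ for $c \leq -h$, $(0, c+h, h)$ for $-h \leq c \leq 0$, $(0, h, h-c)$ for $0 \leq c \leq h$, and $(c-h, h, 0)$ for $c \geq h$. A short calculation in each case shows that the objective of the reduced problem evaluates exactly to $\Upsilon_k(c)$; the constants $5h/6$ and $-h/6$ telescope as intended.

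Finally, the lower bound $V(c) \geq \Upsilon_k(c)$ follows from strong duality. Because the feasible region has a Slater point and the objective is convex, it suffices to produce multipliers certifying KKT at the primal points above. Attaching $\lambda$ to the constraint $c_\rma + c_{\mathrm{c}1} - c_{\mathrm{c}2} \geq c$, stationarity in $c_\rma$ forces $\lambda \in [0,1]$, and stationarity in $c_{\mathrm{c}1}, c_{\mathrm{c}2}$ forces $c_{\mathrm{c}1}^{2}/(2h^{2}) = \lambda$ and $c_{\mathrm{c}2}^{2}/(2h^{2}) = 1 - \lambda$ on interior values. Choosing $\lambda = 0$ (for $c \leq -h$), $\lambda = (c+h)^{2}/(2h^{2})$ (for $-h \leq c \leq 0$), $\lambda = 1 - (h-c)^{2}/(2h^{2})$ (for $0 \leq c \leq h$), and $\lambda = 1$ (for $c \geq h$), with appropriate nonnegative multipliers activated on the box constraints $c_{\mathrm{c}i} \in \{0, h\}$, yields feasible KKT certificates. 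The hard part is to do the regime-by-regime bookkeeping cleanly without repetition; combining Steps 3 and 4 into a single complementary-slackness table for the four regimes should keep the proof compact. Invoking Proposition \ref{Compliance_can_be_SOCP}'s style of SOC reformulation is not needed here, but an appeal to convexity of the reduced problem via Proposition \ref{Upsilon_Prop} (which guarantees that $\Upsilon_k$ is convex, consistent with $V(c)$ being the value function of a convex program) provides a useful sanity check.
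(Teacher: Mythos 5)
Your proposal is correct, and its essential ingredients coincide with the paper's proof: the same piecewise-cubic closed form of $\Upsilon_k$, the same reduced three-variable convex program in $(c_\rma, c_{\mathrm{c1}}, c_{\mathrm{c2}})$, and the same rotated-cone encoding of $s_i \geq c_{\mathrm{c}i}^3$ via $r_i \geq c_{\mathrm{c}i}^2$ and $s_i c_{\mathrm{c}i} \geq r_i^2$. The one genuine difference is in how the central equivalence is justified. The paper argues \emph{forward}: it asserts, by appeal to the additive-decomposition technique of \cite[Section~4.3.2]{Kannosan}, that the piecewise function \eqref{Upsilon_Triangular} ``can be written as'' the optimal value of the reduced program, and then lifts that program to an SOCP; the verification that the decomposition actually reproduces each of the four branches is left implicit. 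You argue \emph{backward} and make that step rigorous: you collapse the SOCP to the reduced program, then prove $V(c)=\Upsilon_k(c)$ by exhibiting, in each of the four regimes, a primal feasible point attaining $\Upsilon_k(c)$ together with KKT multipliers certifying optimality (your multiplier choices check out: e.g.\ for $-h\le c\le 0$ one gets $\lambda=(c+h)^2/(2h^2)\le 1/2$ and the slack $\nu_2=1/2-\lambda\ge 0$ on $c_{\mathrm{c2}}\le h$). This buys a self-contained certificate of both inequalities $V(c)\le\Upsilon_k(c)$ and $V(c)\ge\Upsilon_k(c)$ at the cost of regime-by-regime bookkeeping, whereas the paper's route is shorter but leans on the cited decomposition pattern. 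One incidental remark: both you and the paper's appendix work with the correct triangular kernel $k(y)=(1-|y|)\1_{\{|y|\le 1\}}$; the displayed definition \eqref{Triangular_kernel} contains a sign typo ($y-1$ should read $1-y$ on $0\le y\le 1$), which your derivation silently and correctly repairs.
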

\begin{proof}
  See Section \ref{A.4}.
\end{proof}
Using Propositions \ref{Compliance_can_be_SOCP} and \ref{Triangular_decomp}, we can transform problem \eqref{Extension_Modified} into the following second-order cone programming problem:
\begin{equation}
  \begin{array}{ll}
    \mathrm{Min.} & (\tau - 1) \lambda_2 + \eta_2 + \displaystyle \sum_{i=1}^n w_i^0 z_{2i} \\
    \st & z_{2i} + \lambda_2 \geq 
    {\left\| 
    \begin{bmatrix}
        z_{2i} - \lambda_2 \\
        y_{2i}
    \end{bmatrix}
    \right\|},\quad i=1,\dots,n, \\
    & y_{2i} \geq \iota_{2i} + 2\lambda_2,\quad i=1,\dots,n, \\
    & \iota_{2i} \geq \displaystyle \sum_{j=1}^m 2b_{ij} - \eta_2,\quad i=1,\dots,n, \\
    & (\tau - 1) \lambda_1 + \eta_1 + \displaystyle \sum_{i=1}^n w_i^0 z_{1i} \leq (1-\gamma)(\nu-\alpha), \\
    & z_{1i} + \lambda_1 \geq 
    {\left\| 
    \begin{bmatrix}
        z_{1i} - \lambda_1 \\
        y_{1i}
    \end{bmatrix}
    \right\|},\quad i=1,\dots,n, \\
    & y_{1i} \geq \iota_{1i} + 2\lambda_1,\quad i=1,\dots,n, \\
    & \iota_{1i} \geq c_{\rma i} + \displaystyle \frac{s_{1i} + s_{2i}}{6h^2} + s_{3i} - \eta_1,\quad i=1,\dots,n, \\
    \\
    & s_{1i} + c_{\mathrm{c1} i} \geq
    {\left\| 
    \begin{bmatrix}
        s_{1i} - c_{\mathrm{c1} i} \\
        2r_{1i}
    \end{bmatrix}
    \right\|},\quad 
    r_{1i} + \displaystyle \frac{1}{4} \geq
    {\left\| 
    \begin{bmatrix}
        r_{1i} - \frac{1}{4} \\
        c_{\mathrm{c1} i}
    \end{bmatrix}
    \right\|},\quad i=1,\dots,n, \\
    & s_{2i} + c_{\mathrm{c2} i} \geq
    {\left\| 
    \begin{bmatrix}
        s_{2i} - c_{\mathrm{c2} i} \\
        2r_{2i}
    \end{bmatrix}
    \right\|},\quad 
    r_{2i} + \displaystyle \frac{1}{4} \geq
    {\left\| 
    \begin{bmatrix}
        r_{2i} - \frac{1}{4} \\
        c_{\mathrm{c2} i}
    \end{bmatrix}
    \right\|},\quad i=1,\dots,n, \\
    \\
    & s_{3i} + c_{\mathrm{c2} i} \geq \displaystyle \frac{5h}{6},\quad i=1,\dots,n, \\
    & c_{\rma i} + c_{\mathrm{c1} i} - c_{\mathrm{c2} i} \geq \displaystyle \sum_{j=1}^m 2b_{ij} - \alpha,\quad i=1,\dots,n, \\
    & b_{ij} + x_j \geq 
    {\left\| 
    \begin{bmatrix}
        b_{ij} - x_j \\
        \sqrt{2l_{j}/E} q_{ij}
    \end{bmatrix}
    \right\|},\quad i=1,\dots,n,\ j=1,\dots,m, \\
    & \displaystyle \sum_{j=1}^m q_{ij} \bm{\beta}_j = \hat{\bm{\xi}}_i,\quad i=1,\dots,n, \\
    & \bm{x} \in \mathcal{X},\quad \lambda_1 \geq 0,\quad \lambda_2 \geq 0,\quad \bm{y}_1 \geq \bm{0},\quad \bm{y}_2 \geq \bm{0}, \\
    & \bm{0} \leq \bm{c}_\mathrm{c1} \leq h \bm{1},\quad \bm{0} \leq \bm{c}_\mathrm{c2} \leq h \bm{1},\quad \bm{c}_\rma \geq \bm{0},
  \end{array} \label{Extension_Modified_Triangular_Compliance}
\end{equation}
where the optimization variables are $\bm{x} \in \mathbb{R}^m$, $\alpha \in \mathbb{R}$, $\lambda_1 \in \mathbb{R}$, $\lambda_2 \in \mathbb{R}$, $\eta_1 \in \mathbb{R}$, $\eta_2 \in \mathbb{R}$, $\bm{\iota}_1 \in \mathbb{R}^n$, $\bm{\iota}_2 \in \mathbb{R}^n$, $\bm{y}_1 \in \mathbb{R}^n$, $\bm{y}_2 \in \mathbb{R}^n$, $\bm{z}_1 \in \mathbb{R}^n$, $\bm{z}_2 \in \mathbb{R}^n$, $\bm{c}_\mathrm{c1} \in \mathbb{R}^n$, $\bm{c}_\mathrm{c2} \in \mathbb{R}^n$, $\bm{c}_\rma \in \mathbb{R}^n$, $\bm{s}_1 \in \mathbb{R}^n$, $\bm{s}_2 \in \mathbb{R}^n$, $\bm{s}_3 \in \mathbb{R}^n$, $\bm{r}_1 \in \mathbb{R}^n$, $\bm{r}_2 \in \mathbb{R}^n$, $\bm{b}_1 \dots, \bm{b}_n \in \mathbb{R}^m$, and $\bm{q}_1, \dots, \bm{q}_n \in \mathbb{R}^m$.

\begin{remark}
When $\tau=0$, problem \eqref{DRO_Compliance} can be expressed as the 
  following second-order cone programming problem:
\begin{equation}
\begin{array}{ll}
    \mathrm{Min.} & \displaystyle \sum_{i=1}^n w_i^0  \pi^\mathrm{c}(\bm{x}; \hat{\bm{\xi}}_i) \\
    \st & \displaystyle \sum_{i=1}^n w_i^0 \left( c_{\rma i} + \frac{s_{1i} + s_{2i}}{6h^2} \right) \leq (1-\gamma) (\nu - \alpha), \\
    & s_{1i} + c_{\mathrm{c1}i} \geq
    {\left\| 
    \begin{bmatrix}
        s_{1i} - c_{\mathrm{c1}i} \\
        2r_{1i}
    \end{bmatrix}
    \right\|},\quad 
    r_{1i} + \displaystyle \frac{1}{4} \geq
    {\left\| 
    \begin{bmatrix}
        r_{1i} - \frac{1}{4} \\
        c_{\mathrm{c1}i}
    \end{bmatrix}
    \right\|},\quad i=1,\dots,n, \\
    \\
    & s_{2i} + c_{\mathrm{c2}i} \geq
    {\left\| 
    \begin{bmatrix}
        s_{2i} - c_{\mathrm{c2}i} \\
        2r_{2i}
    \end{bmatrix}
    \right\|},\quad
    r_{2i} + \displaystyle \frac{1}{4} \geq
    {\left\| 
    \begin{bmatrix}
        r_{2i} - \frac{1}{4} \\
        c_{\mathrm{c2}i}
    \end{bmatrix}
    \right\|},\quad i=1,\dots,n, \\
    \\
    & s_{3i} + c_{\mathrm{c2}i} \geq \displaystyle \frac{5h}{6},\quad i=1,\dots,n, \\
    & c_{\rma i} + c_{\mathrm{c1}i} - c_{\mathrm{c2}i} \geq \displaystyle \sum_{j=1}^m 2b_{ij} - \alpha,\quad i=1,\dots,n, \\
    & b_{ij} + x_j \geq 
    {\left\| 
    \begin{bmatrix}
        b_{ij} - x_j \\
        \sqrt{2l_{j}/E} q_{ij}
    \end{bmatrix}
    \right\|},\quad i=1,\dots,n,\ j=1,\dots,m, \\
    & \displaystyle \sum_{j=1}^m q_{ij} \bm{\beta}_j = \hat{\bm{\xi}}_i,\quad i=1,\dots,n, \\
    & \bm{x} \in \mathcal{X},\quad \bm{0} \leq \bm{c}_{\mathrm{c1}} \leq h \bm{1},\quad \bm{0} \leq \bm{c}_{\mathrm{c2}} \leq h \bm{1},\quad \bm{c}_\rma \geq \bm{0},
\end{array} \label{Extension_tau0_Triangular_Compliance}
\end{equation}
where the optimization problems are $\bm{x} \in \mathbb{R}^m$, $\alpha 
  \in \mathbb{R}$, $\bm{c}_{\mathrm{c1}} \in \mathbb{R}^n$, 
  $\bm{c}_{\mathrm{c2}} \in \mathbb{R}^n$, $\bm{c}_\rma \in 
  \mathbb{R}^n$, $\bm{s}_1 \in \mathbb{R}^n$, $\bm{s}_2 \in 
  \mathbb{R}^n$, $\bm{s}_3 \in \mathbb{R}^n$, $\bm{r}_1 \in 
  \mathbb{R}^n$, $\bm{r}_2 \in \mathbb{R}^n$, $\bm{b}_1 \dots, \bm{b}_n 
  \in \mathbb{R}^m$, and $\bm{q}_1, \dots, \bm{q}_n \in \mathbb{R}^m$.
\end{remark}

\section{Numerical experiments} \label{6}
In this section, we conduct numerical experiments on problem \eqref{Extension_Modified_Uniform_Compliance} and problem \eqref{Extension_Modified_Triangular_Compliance}, which are 
distributionally robust CVaR-constrained expected value minimization 
problem.
The problems were solved using MATLAB R2023b with CVX version 2.1 \cite{CVX} and the solver MOSEK ver.~9.1.9 \cite{mosek}. All computations were performed on a PC running Windows 11 (Intel Core i7, 1.8{\,}GHz CPU, 16{\,}GB RAM).

\subsection{2-bar truss} \label{6.1}
Consider the truss shown in Figure \ref{fig:2-bar_truss}. The truss has $m=2$ members and $d=2$ degrees of freedom of the nodal displacements. 
In  Figure \ref{fig:2-bar_truss}, the filled circles represent fixed nodes and the open circle represents a free node.
The length of the horizontal member is $1\,\mathrm{m}$. 
The Young modulus of the members and the upper bound of the total structural volume are 
$E=20\,\mathrm{GPa}$ and $\overline{V}=1000\,\mathrm{mm}^3$, respectively.

\begin{figure}[!htbp]
  \centering
  \includegraphics[width=30mm]{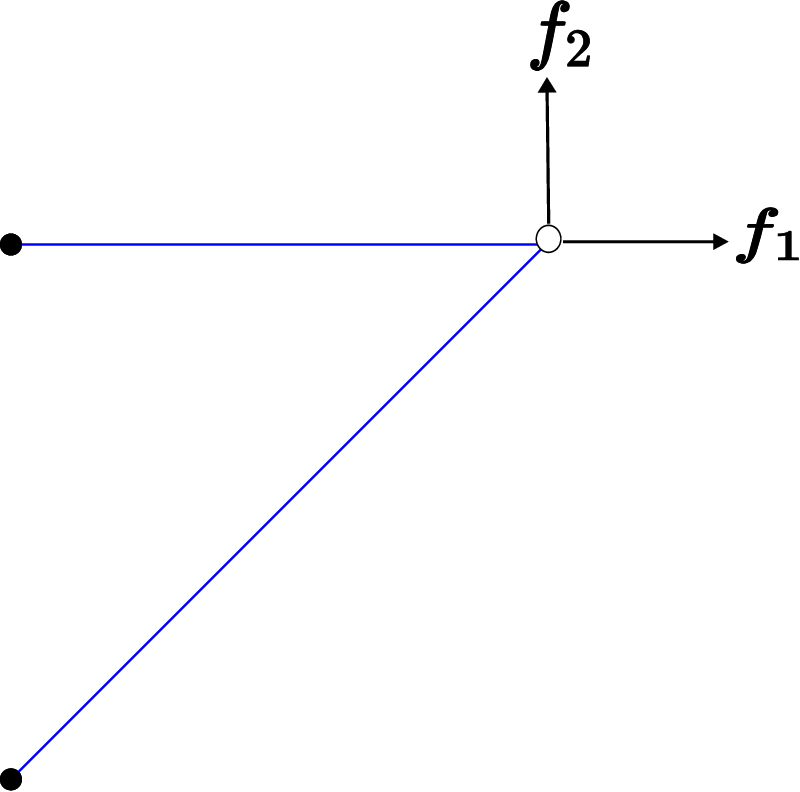}
  \caption{2-bar truss}
  \label{fig:2-bar_truss}
\end{figure}

We obtain the Pareto optimal solutions of the bi-objective minimization of the 
worst-case expected value and the worst-case CVaR of the compliance, 
and analyze their trade-off relation. Figure \ref{fig:scatter plot of 
generated data2} shows $n=50$ samples of external forces $\{\hat{\bm{\xi}}_1, \dots, \hat{\bm{\xi}}_n\}$ used in the numerical experiments of this section. In the figure, the horizontal and vertical components of the external forces are represented by $f_1$ and $f_2$, respectively, and are shown along the $x$-axis and $y$-axis. These samples are drawn from a bivariate normal 
distribution $N(\bm{\mu}_1, \varSigma_1)$ with mean vector $\bm{\mu}_1$ 
and variance-covariance matrix $\varSigma_1$, where 
\begin{equation*}
    \bm{\mu}_1 = 
    \begin{bmatrix}
        100 \\
        0
    \end{bmatrix}\,\mathrm{kN}, \quad
    \varSigma_1 = 
    \begin{bmatrix}
        150 & 50 \\
        50 & 100
    \end{bmatrix}\,\mathrm{kN}^2.
\end{equation*}
In Figure \ref{fig:scatter plot of generated data2}, each open circle depicts a sample, while the filled square represents the mean vector $\bm{\mu}_1$. 
\begin{figure}
\centering
\includegraphics[width=60mm]{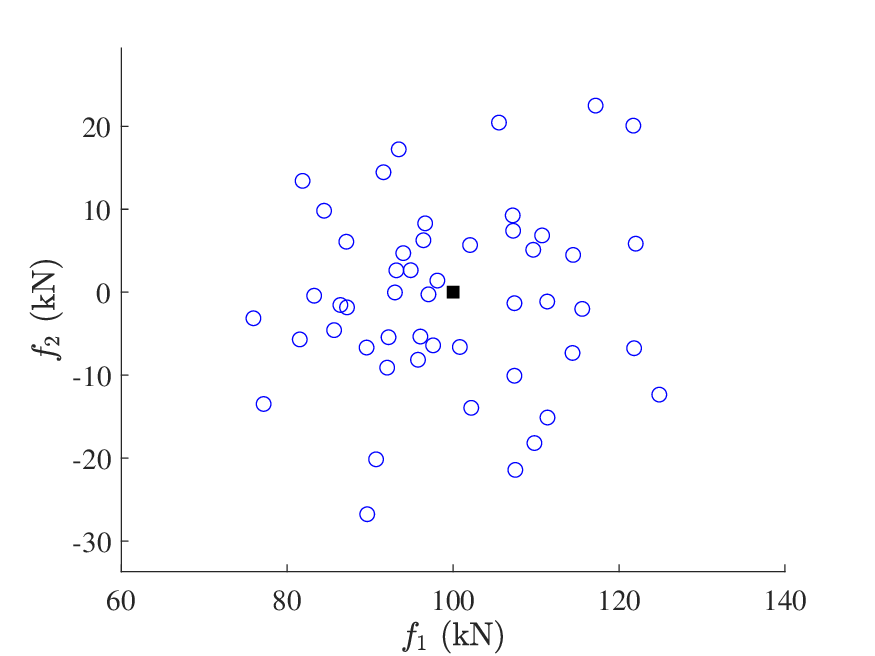}
\caption{50 samples used in Section~6.1}
\label{fig:scatter plot of generated data2}
\end{figure}
The sample mean vector $\hat{\bm{\mu}}_{1}$ and the sample variance-covariance matrix $\hat{\varSigma}_{1}$ of $\{ \hat{\bm{\xi}}_1,\dots,\hat{\bm{\xi}}_n \}$ are
\begin{equation*}
    \hat{\bm{\mu}}_{1} = 
    \begin{bmatrix}
        99.491 \\
        -0.810
    \end{bmatrix}\,\mathrm{kN}, \quad
    \hat{\varSigma}_{1} = 
    \begin{bmatrix}
        156.15 & 12.92 \\
        12.92 & 119.21
    \end{bmatrix}\,\mathrm{kN}^2.
\end{equation*}
Parameters in problems \eqref{Extension_Modified_Uniform_Compliance} are set as 
\begin{equation}
 \gamma = 0.95,\quad h = 10\,\mathrm{J},\quad \bm{w}^0 = \frac{1}{50} \bm{1},\quad \tau = 0.3. \label{setting3}
\end{equation}

We obtain the Pareto front 
by solving problem \eqref{Extension_Modified_Uniform_Compliance} for different values of the upper bound on the worst-case CVaR, $\nu$.
Figure \ref{fig:Pareto_data2} shows the Pareto front of the worst-case 
expected value and the worst-case CVaR of the compliance, while Figure 
\ref{fig:Shift_optimal_data2} depicts the corresponding Pareto optimal solutions. In Figure \ref{fig:Shift_optimal_data2}, $x_1^*$ and $ x_2^*$ represent the optimal cross-sectional areas of the upper member and the lower member, respectively.
In Figures \ref{fig:Pareto_data2} and \ref{fig:Shift_optimal_data2}, the 
leftmost point corresponds to the solution with the minimum worst-case CVaR. 
In contrast, the rightmost point corresponds to the solution with the minimum worst-case expected value. 
It can be observed from Figure \ref{fig:Shift_optimal_data2} that 
as the worst-case CVaR decreases, the cross-sectional area of the upper member decreases, while the cross-sectional area of the lower member increases to enhance robustness against vertical variations in the external forces.
\begin{figure}
\centering
\begin{subfigure}[t]{0.45\textwidth}
  \includegraphics[width=60mm]{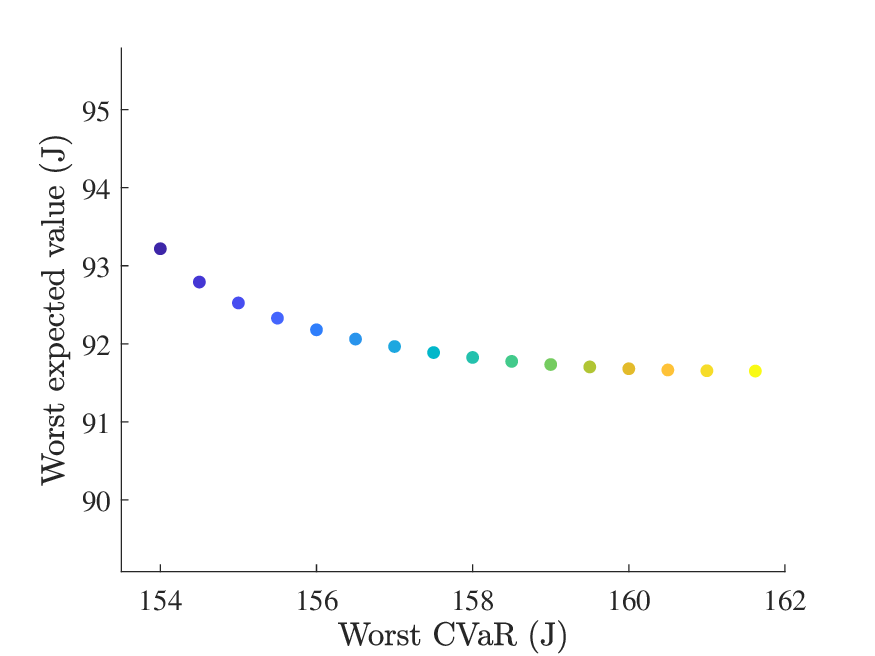}
  \caption{Pareto front.}
  \label{fig:Pareto_data2}
\end{subfigure}
\hfill
\begin{subfigure}[t]{0.45\textwidth}
\centering
\includegraphics[width=60mm]{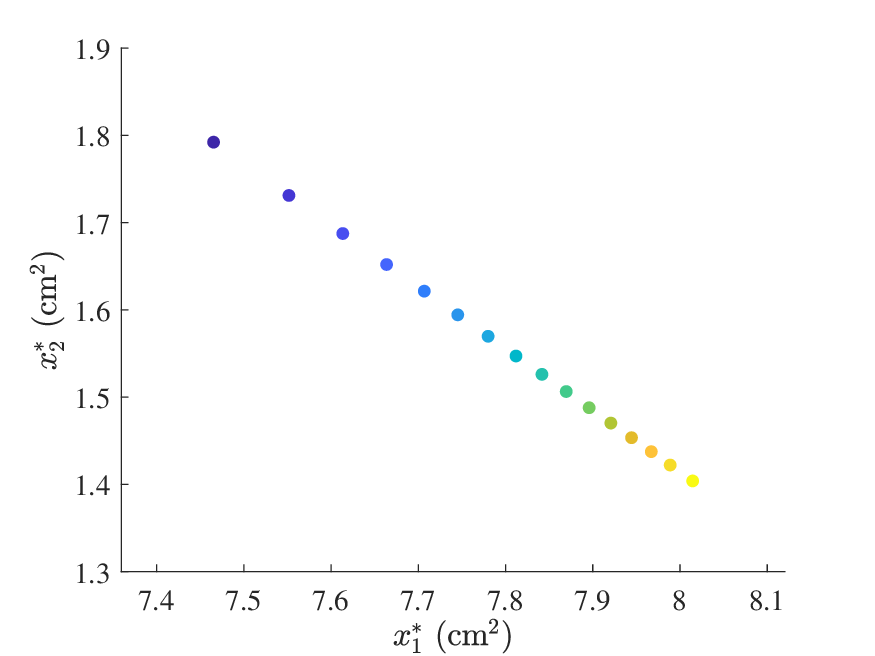}
\caption{Pareto solutions.}
\label{fig:Shift_optimal_data2}
\end{subfigure}
\caption{Pareto front and Pareto solutions}
\end{figure}
Moreover, we investigate the variation of the Pareto front with respect 
to $\tau$, which represents the magnitude of uncertainty. Figure \ref{fig:Pareto_tau0.3-0.5} shows the Pareto fronts for 
$\tau=0.3$, $0.4$, and $0.5$. From Figure \ref{fig:Pareto_tau0.3-0.5}, 
we can observe that as the value of $\tau$ decreases, the feasible 
region of the design variables expands. Consequently, the Pareto front shifts downward as optimal solutions with smaller worst-case expected value of the compliance are obtained. 
\begin{figure}
\centering
\includegraphics[width=60mm]{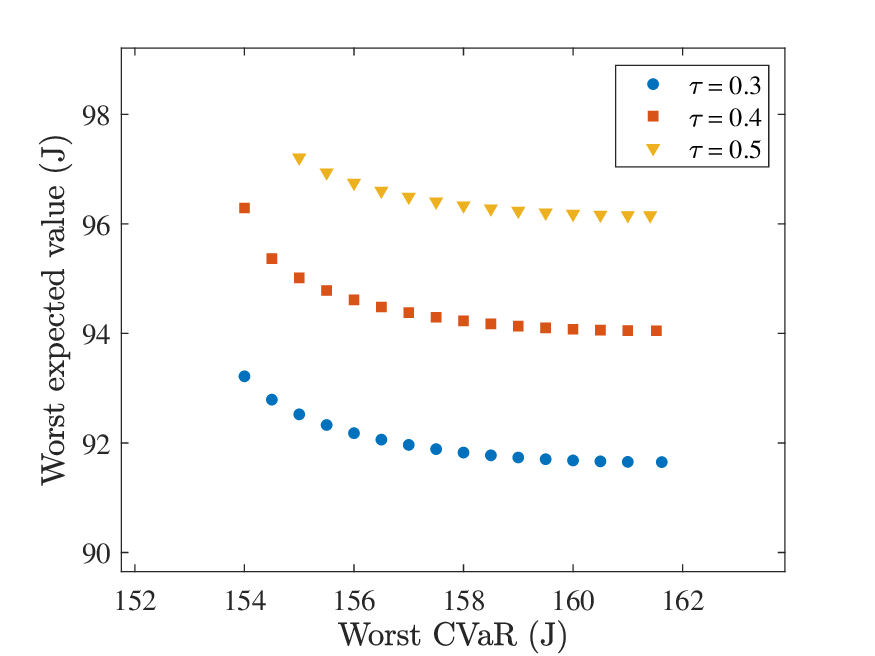}
\caption{Pareto fronts corresponding to different values of $\tau$}
\label{fig:Pareto_tau0.3-0.5}
\end{figure}

\subsection{289-bar truss} \label{6.2}
Consider the planar truss shown in Figure \ref{fig:289-bar_truss}. The truss consists of $m=289$ members, and $d=56$ degrees of freedom for nodal displacements. In Figure \ref{fig:289-bar_truss}, the filled circles represent fixed nodes, while the open circles represent free nodes. The distance between the nearest nodes is $1\,\mathrm{m}$. The Young modulus of the members and the upper bound of the total member volume are $E=20\,\mathrm{GPa}$ and $\overline{V}=20000\,\mathrm{mm}^3$, respectively.
\begin{figure}
\centering
\includegraphics[width=60mm]{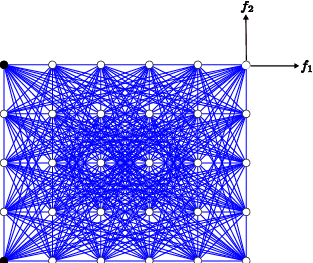}
\caption{289-bar truss in Section~6.2}
\label{fig:289-bar_truss}
\end{figure}
For the numerical experiments in this section, the external forces are assumed to act only on the top-right node of the truss depicted in Figure \ref{fig:289-bar_truss}. The external load follows a mixture distribution composed of two normal distributions. A total of $n=50$ samples were utilized for the experiment. These samples are drawn from a mixture distribution consisting of $N(\bm{\mu}_2, \varSigma_2)$ and $N(\bm{\mu}_3, \varSigma_3)$, where
\begin{alignat*}{3}
    \bm{\mu}_2 &= 
    \begin{bmatrix}
        90 \\
        10
    \end{bmatrix}\,\mathrm{kN}, &\quad
    &\varSigma_2 = 
    \begin{bmatrix}
        100 & 0 \\
        0 & 150
    \end{bmatrix}\,\mathrm{kN}^2, \\
    \bm{\mu}_3 &= 
    \begin{bmatrix}
        -10 \\
        40
    \end{bmatrix}\,\mathrm{kN}, &\quad
    &\varSigma_3 = 
    \begin{bmatrix}
        100 & 0 \\
        0 & 150
    \end{bmatrix}\,\mathrm{kN}^2.
\end{alignat*}
The sample mean vector $\hat{\bm{\mu}}_{2}$ and the sample variance-covariance matrix $\hat{\varSigma}_{2}$ obtained from the samples drawn from distribution $N(\bm{\mu}_2, \varSigma_2)$, as well as the sample mean vector $\hat{\bm{\mu}}_{3}$ and the sample variance-covariance matrix $\hat{\varSigma}_{3}$ obtained from the samples drawn from distribution $N(\bm{\mu}_3, \varSigma_3)$, are as follows:
\begin{alignat*}{3}
    \hat{\bm{\mu}}_{2} &= 
    \begin{bmatrix}
        89.767 \\
        11.054
    \end{bmatrix}\,\mathrm{kN}, &\quad
    &\hat{\varSigma}_{2} = 
    \begin{bmatrix}
        73.42 & 21.04 \\
        21.04 & 107.15
    \end{bmatrix}\,\mathrm{kN}^2, \\
    \hat{\bm{\mu}}_{3} &= 
    \begin{bmatrix}
        -5.422 \\
        35.281
    \end{bmatrix}\,\mathrm{kN}, &\quad
    &\hat{\varSigma}_{3} = 
    \begin{bmatrix}
        46.63 & -14.88 \\
        -14.88 & 107.15
    \end{bmatrix}\,\mathrm{kN}^2.
\end{alignat*}

In this section, we perform experiments using the samples following a mixture distribution. In existing literature, handling data from mixture distributions is a challenge as such data are often modeled as following a specific distribution. This section demonstrates the effectiveness of our approach in addressing the challenge.

Figure \ref{fig:generated_load_data_289-bar_Truss} illustrates the samples used in the numerical experiments. Each ``$\circ$'' and ``$\times$'' correspond to the sample drawn from the normal distribution $N(\bm{\mu}_2, \varSigma_2) $ and the sample from $N(\bm{\mu}_3, \varSigma_3)$, respectively. The filled square and the filled triangle represent mean vectors of these distributions.
\begin{figure}
\centering
\includegraphics[width=60mm]{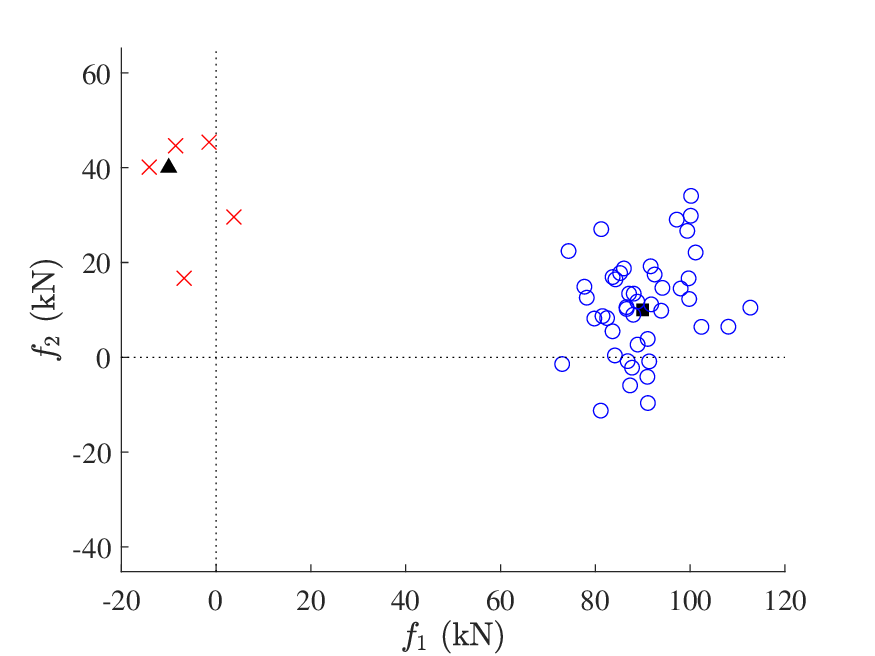}
\caption{50 samples used in Section~6.2}
\label{fig:generated_load_data_289-bar_Truss}
\end{figure}

By solving the distributionally robust CVaR-constrained expected value minimization problem \eqref{Extension_Modified_Uniform_Compliance} using a uniform kernel, the Pareto front of the worst-case expected value and the worst-case CVaR of the compliance was obtained. The settings are as follows:
\begin{equation*}
 \gamma = 0.95,\quad h = 30\,\mathrm{J},\quad \bm{w}^0 = \frac{1}{50} \bm{1}.
\end{equation*}

Figure \ref{fig:Pareto_tau0.3-0.5_289bar_truss} shows the Pareto fronts for $\tau=0.3$, $0.4$, and $0.5$. In the figure, the leftmost point on each Pareto front represents the solution with the minimum worst-case CVaR of the compliance, while the rightmost point represents the solution with the minimum worst-case expected value of the compliance. As the parameter $\tau$, representing the magnitude of uncertainty, decreases, the corresponding distributionally uncertainty set shrinks. Consequently, the Pareto front shifts downward.

\begin{figure}
\centering
\includegraphics[width=60mm]{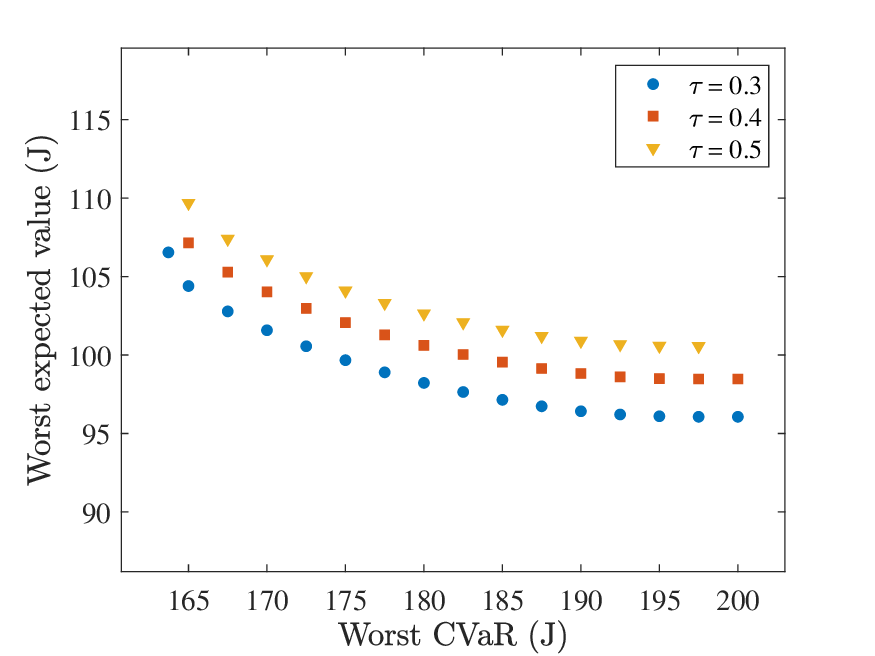}
\caption{Pareto fronts for $\tau=0.3$, $0.4$, and $0.5$}
\label{fig:Pareto_tau0.3-0.5_289bar_truss}
\end{figure}

Figure \ref{fig:Optimal_solutions_289-bar_Truss} illustrates how the optimal solution changes for the optimization problem with $\tau=0.3$. From Figure \ref{fig:Optimal_solutions_289-bar_Truss}, it can be observed that as the upper bound $\nu$ of the worst-case CVaR decreases, the volume of the member between the bottom-left node and the top-right node decreases, while the volume of other members increases to enhance robustness against vertical variations in external force.

\begin{figure}
\centering
\begin{subfigure}[t]{0.45\textwidth}
  \includegraphics[width=60mm]{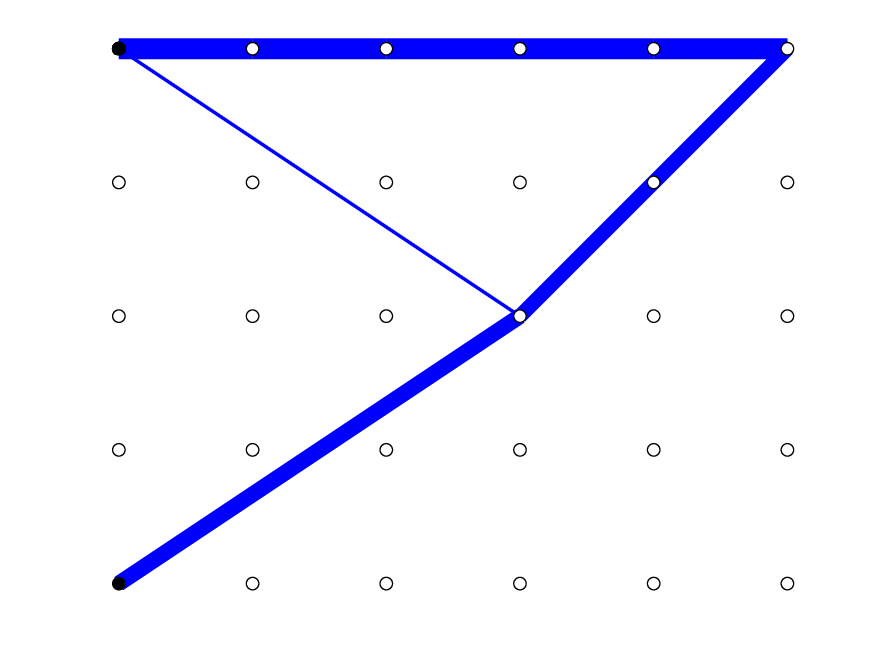}
  \caption{Optimal solution with the minimum worst-case CVaR ($\nu = 163.7\,\mathrm{J}$)}
  \label{fig:Optimal_Min_Worst_CVaR_289-bar_Truss}
\end{subfigure}
\hfill
\begin{subfigure}[t]{0.45\textwidth}
\centering
\includegraphics[width=60mm]{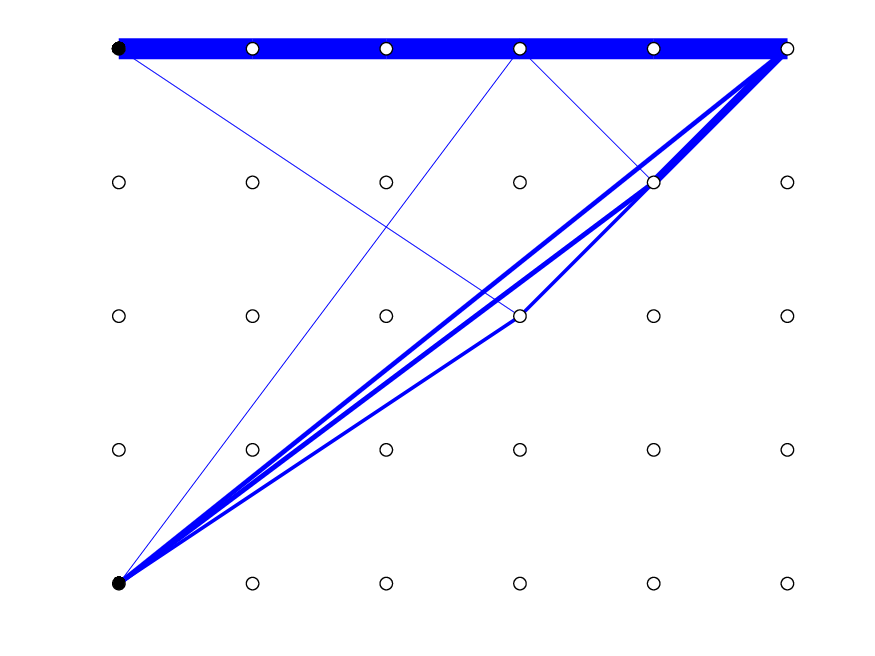}
\caption{Optimal solution with $\nu = 170\,\mathrm{J}$}
\label{fig:Optimal_185J_289-bar_Truss}
\end{subfigure}
\begin{subfigure}[t]{0.45\textwidth}
  \includegraphics[width=60mm]{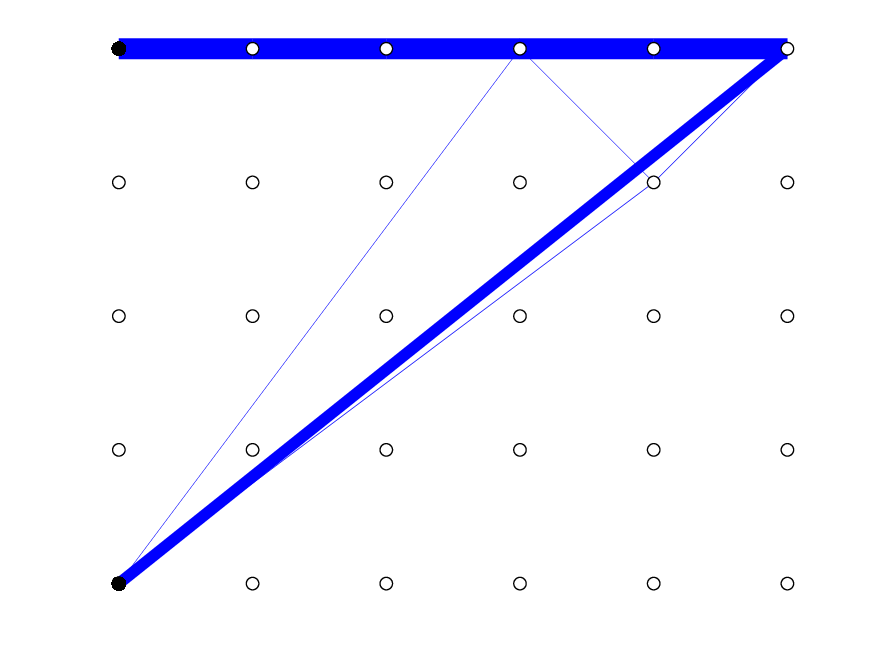}
  \caption{Optimal solution with $\nu = 185\,\mathrm{J}$}
  \label{fig:Optimal_175J_289-bar_Truss}
\end{subfigure}
\hfill
\begin{subfigure}[t]{0.45\textwidth}
\centering
\includegraphics[width=60mm]{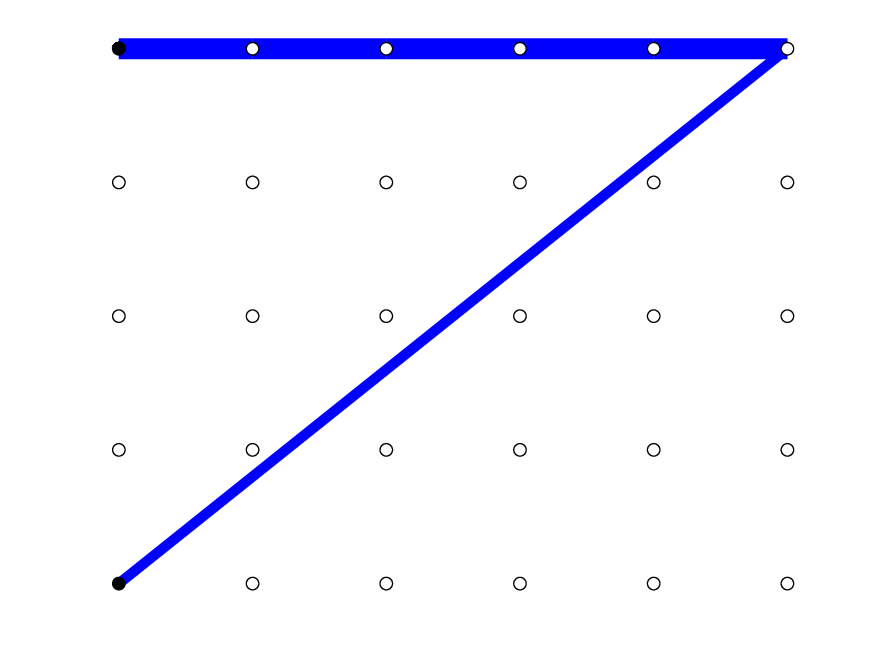}
\caption{Optimal solution with the minimum worst-case expected value ($\nu = 198.3\,\mathrm{J}$)}
\label{fig:Optimal_Min_Worst_expected_value_289-bar_Truss}
\end{subfigure}
\caption{Variation in the optimal solution with respect to the worst-case CVaR obtained in Section~6.2}
\label{fig:Optimal_solutions_289-bar_Truss}
\end{figure}

\subsection{Comparison between the uniform kernel and the triangular kernel} \label{6.3}
In this section, we conduct numerical experiments using the formulations with the uniform kernel presented in Section \ref{4} and the triangular kernel introduced in Section \ref{5}, aiming to compare the Pareto optimal solutions as well as the computational costs.
Consider the planar cantilever truss shown in Figure \ref{fig:289-bar_cantilever_truss}. The truss has $m=289$ members, and $d=50$ degrees of freedom for nodal displacements. In Figure \ref{fig:289-bar_cantilever_truss}, the filled  circles represent fixed nodes, while the open circles represent free nodes. The distance between the nearest nodes is $1\,\mathrm{m}$. The Young modulus of the members and the upper bound of the total member volume are $E=20\,\mathrm{GPa}$ and $\overline{V}=20000\,\mathrm{mm}^3$, respectively.
\begin{figure}
\centering
\includegraphics[width=60mm]{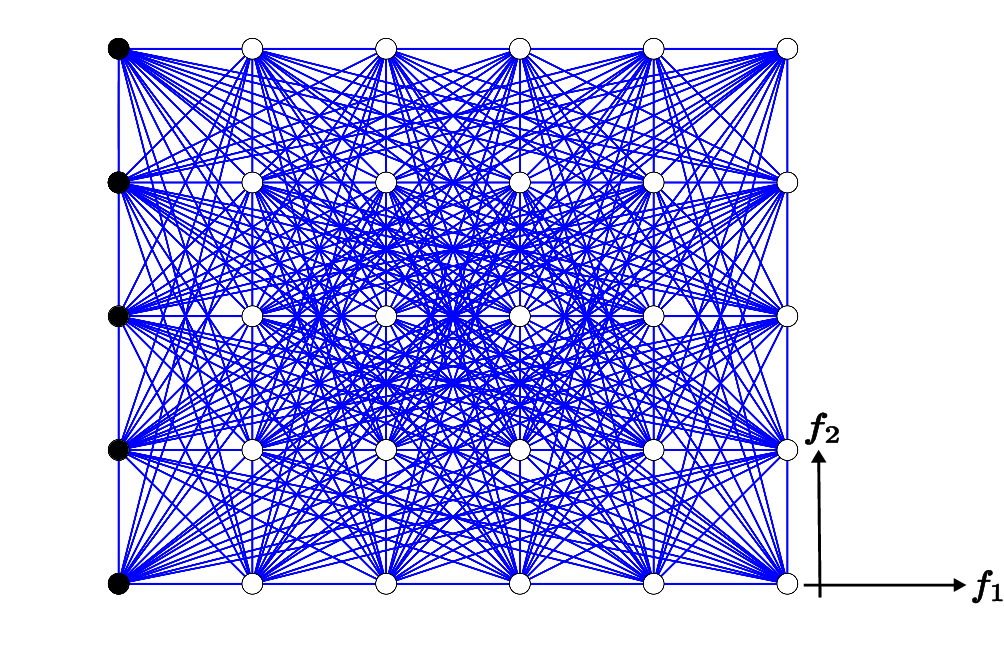}
\caption{289-bar cantilever truss in Section~6.3}
\label{fig:289-bar_cantilever_truss}
\end{figure}

We consider the case where the external forces act only on the bottom-right node of the truss depicted in Figure \ref{fig:289-bar_cantilever_truss}. The external forces follow a mixture distribution composed of two distributions. For the numerical experiments, $n=30$ samples were utilized. These samples were drawn from a mixture distribution consisting of $N(\bm{\mu}_4, \varSigma_4)$ and $N(\bm{\mu}_5, \varSigma_5)$, where
\begin{alignat*}{3}
  \bm{\mu}_4 &= 
  \begin{bmatrix}
    0 \\
    -100
  \end{bmatrix}\,\mathrm{kN}, &\quad
  \varSigma_4 &= 
  \begin{bmatrix}
    100 & 0 \\
    0 & 100
  \end{bmatrix}\,\mathrm{kN}^2, \\
  \bm{\mu}_5 &= 
    \begin{bmatrix}
        100 \\
        -100
    \end{bmatrix}\,\mathrm{kN}, &\quad
    \varSigma_5 &= 
    \begin{bmatrix}
        100 & 0 \\
        0 & 100
    \end{bmatrix}\,\mathrm{kN}^2.
\end{alignat*}
The sample mean vector $\hat{\bm{\mu}}_{4}$ and the sample variance-covariance matrix $\hat{\varSigma}_{4}$ obtained from the samples drawn from distribution $N(\bm{\mu}_4, \varSigma_4)$, as well as the sample mean vector $\hat{\bm{\mu}}_{5}$ and the sample variance-covariance matrix $\hat{\varSigma}_{5}$ obtained from the samples drawn from distribution $N(\bm{\mu}_5, \varSigma_5)$, are as follows:
\begin{alignat*}{3}
    \hat{\bm{\mu}}_{4} &= 
    \begin{bmatrix}
        -1.769 \\
        -97.632
    \end{bmatrix}\,\mathrm{kN}, &\quad
    &\hat{\varSigma}_{4} = 
    \begin{bmatrix}
        106.53 & -14.60 \\
        -14.60 & 111.54
    \end{bmatrix}\,\mathrm{kN}^2, \\
    \hat{\bm{\mu}}_{5} &= 
    \begin{bmatrix}
        96.723 \\
        -104.358
    \end{bmatrix}\,\mathrm{kN}, &\quad
    &\hat{\varSigma}_{5} = 
    \begin{bmatrix}
        67.56 & -3.12 \\
        -3.12 & 79.53
    \end{bmatrix}\,\mathrm{kN}^2.
\end{alignat*}

Figure \ref{fig:generated_load_data_289-bar_Cantilever_Truss} illustrates the samples used in the numerical experiments. Each ``$\circ$'' and ``$\times$'' correspond to the sample drawn from the normal distribution $N(\bm{\mu}_4, \varSigma_4)$ and the sample from $N(\bm{\mu}_5, \varSigma_5)$, respectively. The filled square and the filled triangle represent the mean vectors $\bm{\mu}_4$ and $\bm{\mu}_5$, respectively.

By solving the distributionally robust CVaR-constrained expected value minimization problem \eqref{Extension_Modified_Uniform_Compliance} using a uniform kernel, the Pareto front for the two objectives, i.e., the worst-case expected value and the worst-case CVaR of the compliance are obtained. The settings are as follows:
\begin{equation*}
 \gamma = 0.95,\quad h = 30\,\mathrm{J},\quad \bm{w}^0 = \frac{1}{30} \bm{1}.
\end{equation*}
\begin{figure}
\centering
\includegraphics[width=60mm]{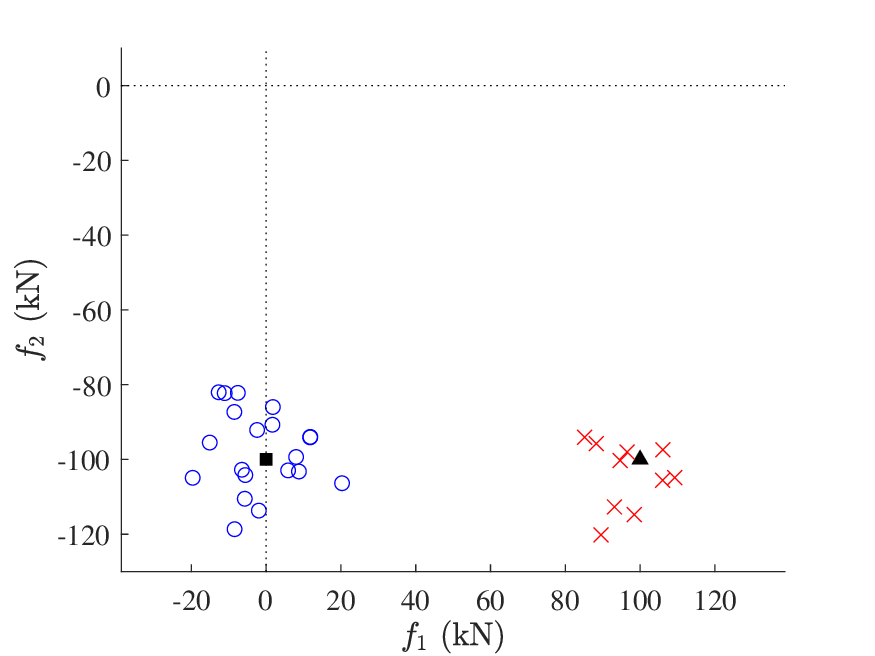}
\caption{30 samples used in Section~6.3}
\label{fig:generated_load_data_289-bar_Cantilever_Truss}
\end{figure}

Figure \ref{fig:Optimal_solutions_289-bar_Cantilever_Truss_Uni} and Figure \ref{fig:Optimal_solutions_289-bar_Cantilever_Truss_Tri} show how the optimal solution changes for $\tau=0.5$ with the uniform kernel and the triangular kernel, respectively. From the figures, it can be observed that the Pareto solutions of problem \eqref{Extension_Modified_Uniform_Compliance} using the uniform kernel and those of problem \eqref{Extension_Modified_Triangular_Compliance} using the triangular kernel show no significant differences in terms of the truss topology and the cross-sectional areas of the members in almost all cases. show no significant difference except the optimal solution with the minimum worst-case CVaR of the problem using the uniform kernel. In each figure, one can see that as the upper bound $\nu$ of the worst-case CVaR decreases, the volume of the members that are robust against horizontal variations in external force decreases, while the volume of the other members that are robust against vertical variations in external force increases.

\begin{figure}
\centering
\begin{subfigure}[t]{0.45\textwidth}
  \includegraphics[width=60mm]{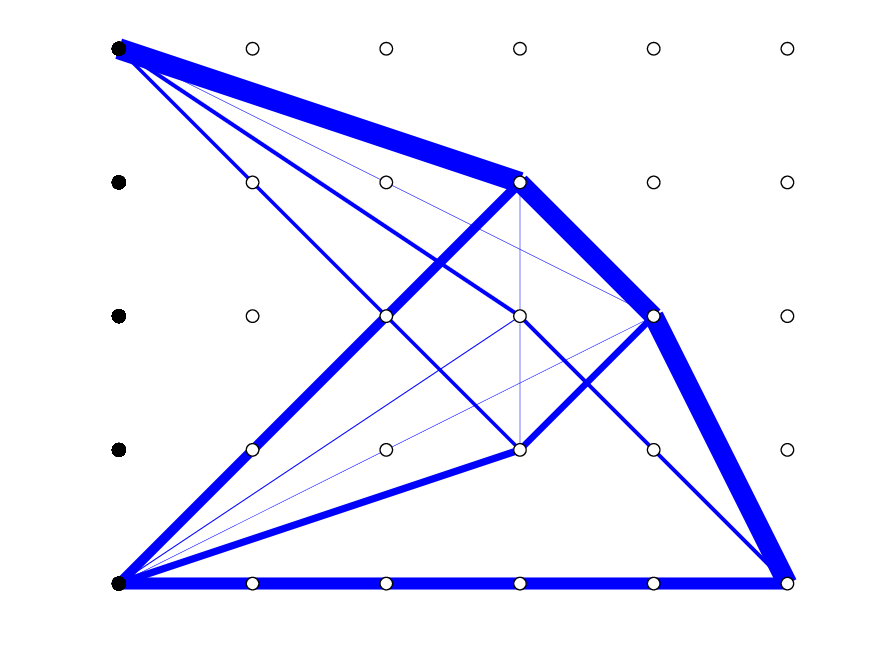}
  \caption{Optimal solution with the minimum worst-case CVaR ($\nu = 859.3\,\mathrm{J}$)}
  \label{fig:Optimal_Min_Worst_CVaR_289-bar_Cantilever_Truss_Uni}
\end{subfigure}
\hfill
\begin{subfigure}[t]{0.45\textwidth}
\centering
\includegraphics[width=60mm]{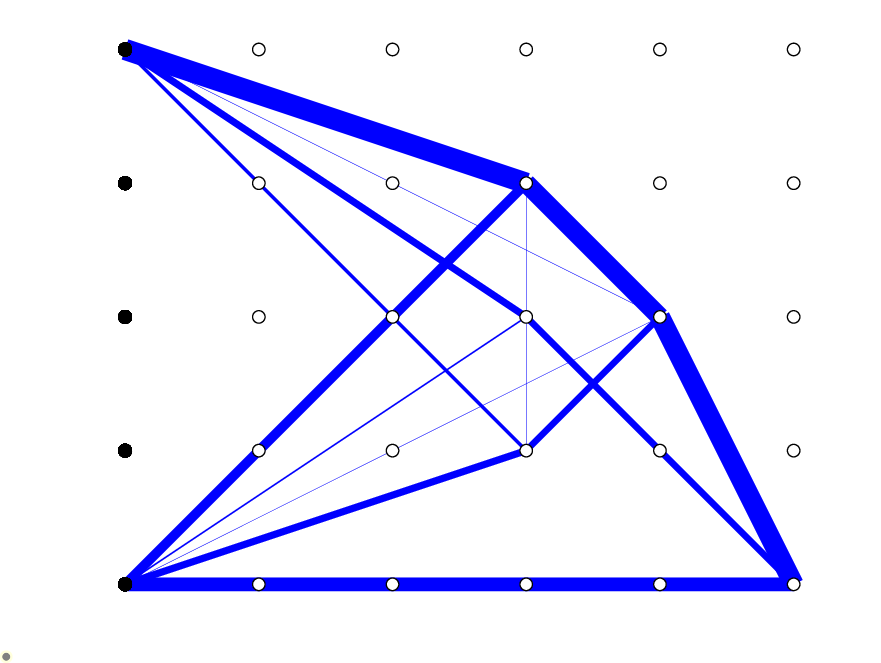}
\caption{Optimal solution with $\nu = 865\,\mathrm{J}$}
\label{fig:Optimal_850J_Cantilever_Truss_Uni}
\end{subfigure}
\begin{subfigure}[t]{0.45\textwidth}
  \includegraphics[width=60mm]{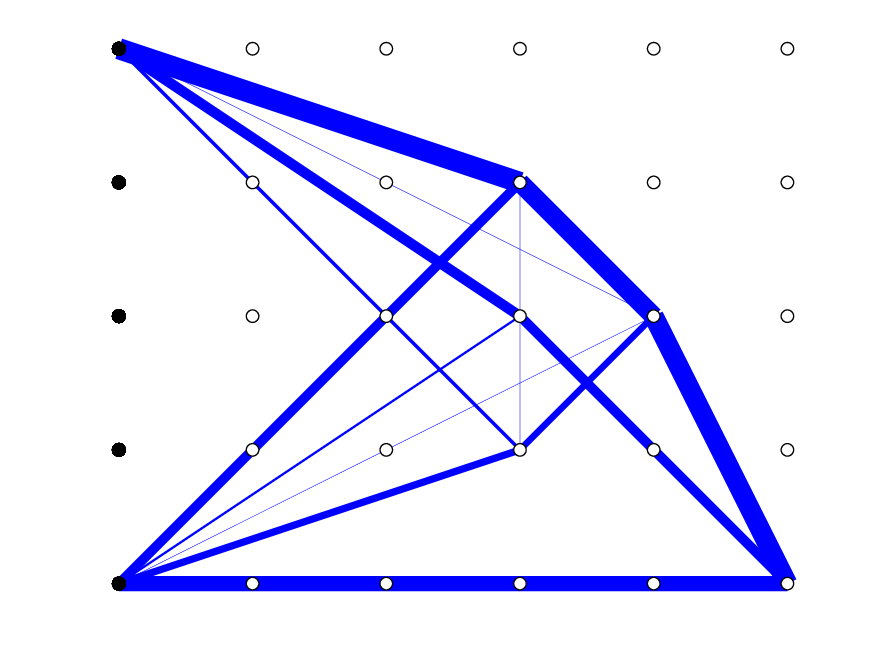}
  \caption{Optimal solution with $\nu = 870\,\mathrm{J}$}
  \label{fig:Optimal_852J_Cantilever_Truss_Uni}
\end{subfigure}
\hfill
\begin{subfigure}[t]{0.45\textwidth}
\centering
\includegraphics[width=60mm]{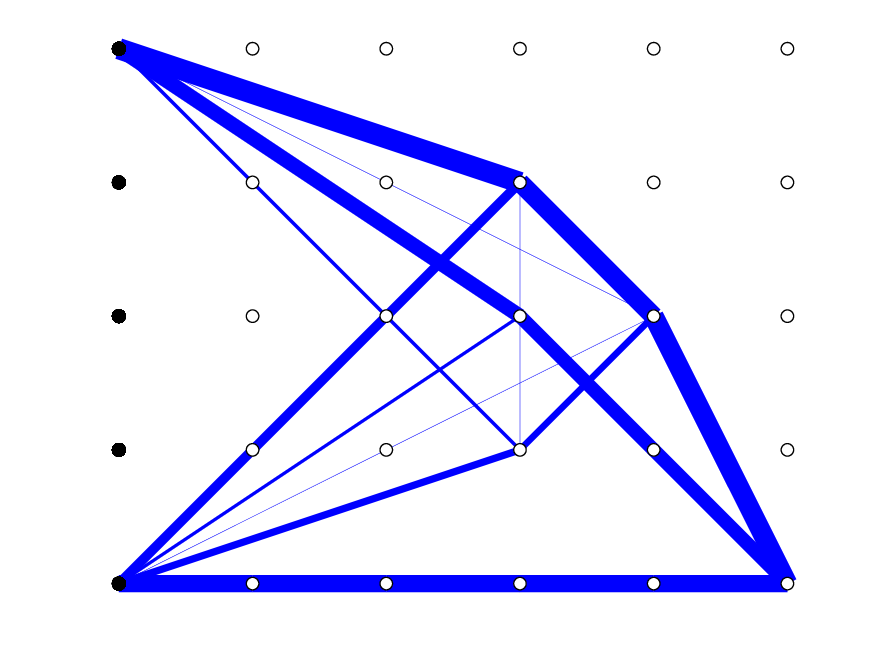}
\caption{Optimal solution with the minimum worst-case expected value ($\nu = 875.4\,\mathrm{J}$)}
\label{fig:Optimal_Min_Worst_expected_value_289-bar_Cantilever_Truss_Uni}
\end{subfigure}
\caption{Variation in the optimal solution with respect to the worst-case CVaR with uniform kernel obtained in Section~6.3}
\label{fig:Optimal_solutions_289-bar_Cantilever_Truss_Uni}
\end{figure}

\begin{figure}
\centering
\begin{subfigure}[t]{0.45\textwidth}
  \includegraphics[width=60mm]{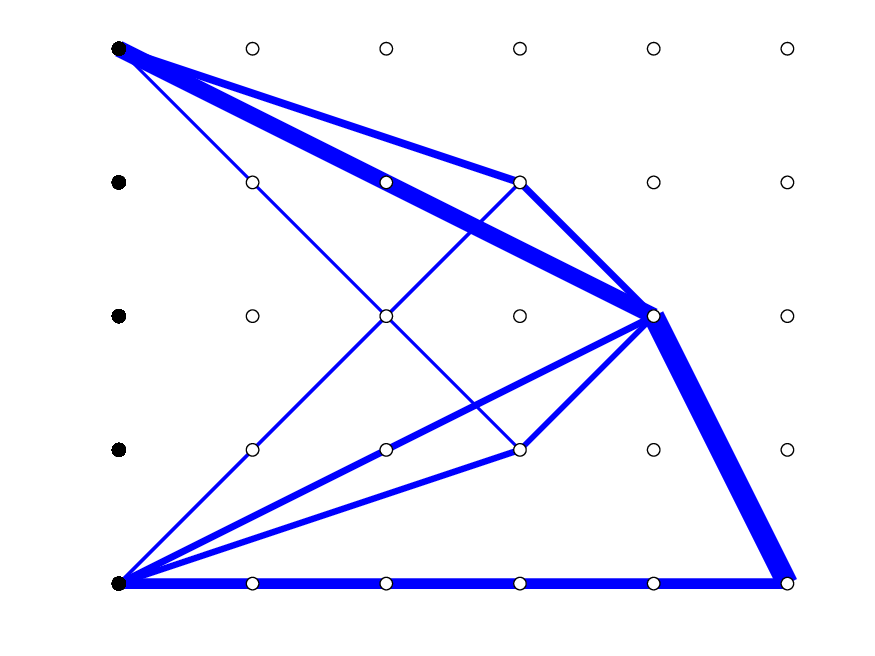}
  \caption{Optimal solution with the minimum worst-case CVaR ($\nu = 846.5\,\mathrm{J}$)}
  \label{fig:Optimal_Min_Worst_CVaR_289-bar_Cantilever_Truss_Tri}
\end{subfigure}
\hfill
\begin{subfigure}[t]{0.45\textwidth}
\centering
\includegraphics[width=60mm]{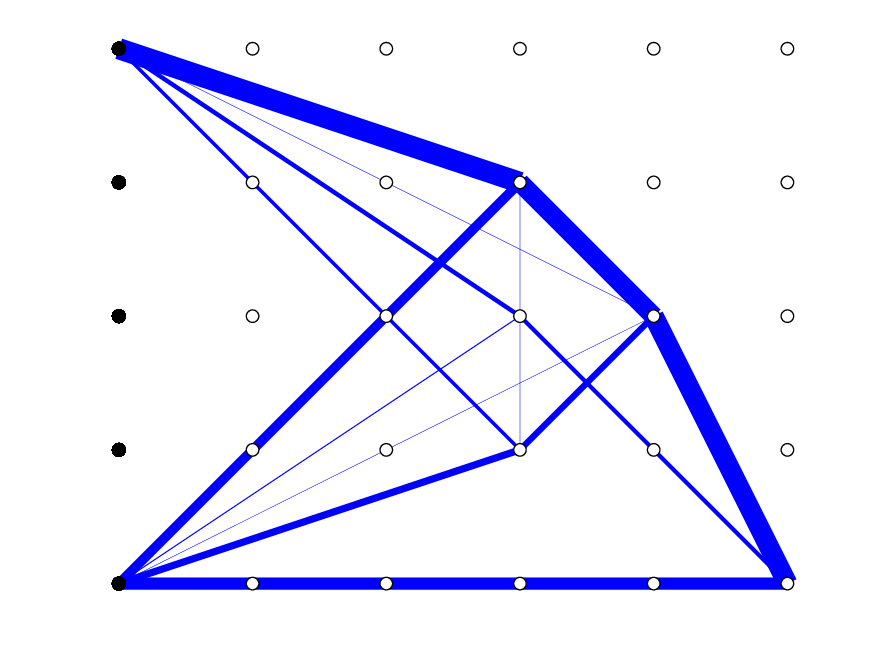}
\caption{Optimal solution with $\nu = 854\,\mathrm{J}$}
\label{fig:Optimal_185J_289-bar_Cantilever_Truss_Tri}
\end{subfigure}
\begin{subfigure}[t]{0.45\textwidth}
  \includegraphics[width=60mm]{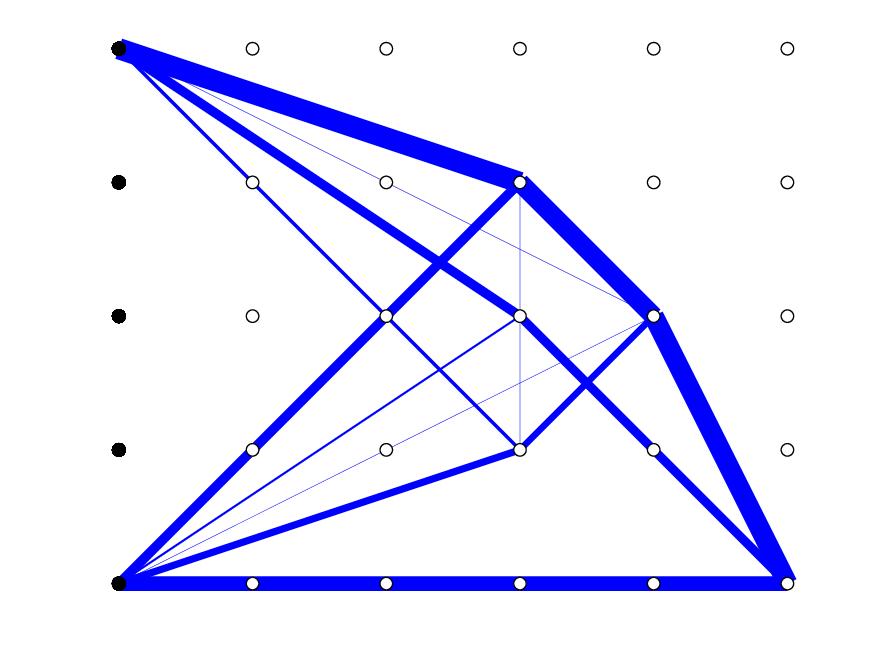}
  \caption{Optimal solution with $\nu = 862\,\mathrm{J}$}
  \label{fig:Optimal_175J_289-bar_Cantilever_Truss_Tri}
\end{subfigure}
\hfill
\begin{subfigure}[t]{0.45\textwidth}
\centering
\includegraphics[width=60mm]{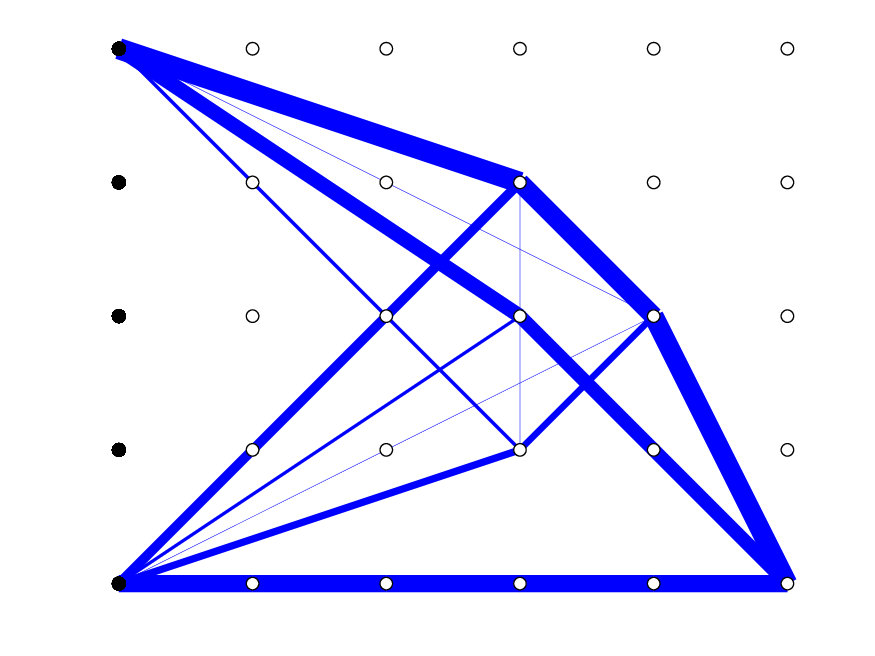}
\caption{Optimal solution with the minimum worst-case expected value ($\nu = 869.8\,\mathrm{J}$)}
\label{fig:Optimal_Min_Worst_expected_value_289-bar_Cantilever_Truss_Tri}
\end{subfigure}
\caption{Variation in the optimal solution with respect to the worst-case CVaR with triangular kernel obtained in Section~6.3}
\label{fig:Optimal_solutions_289-bar_Cantilever_Truss_Tri}
\end{figure}

Figure \ref{fig:Optimal_value_with_respect_to_1-gamma} shows the result of the comparison of the optimal values with respect to the confidence level between the uniform kernel \eqref{Extension_Modified_Uniform_Compliance} and the triangular kernel. In Figure \ref{fig:Optimal_value_with_respect_to_1-gamma}, the diamond and the plus signs represent the the optimal values with respect to the confidence level $1-\gamma$ in problem \eqref{Extension_Modified_Uniform_Compliance} and problem \eqref{Extension_Modified_Triangular_Compliance}, respectively. In both problems, as the confidence level increases, the feasible set of the design variables becomes smaller, resulting in an increase in the worst-case expected value. Due to the difference in the shape of the kernels, the feasible set of problem \eqref{Extension_Modified_Uniform_Compliance} is always included in the feasible set of problem \eqref{Extension_Modified_Triangular_Compliance}. As a result, when the same worst-case CVaR value is imposed on the constraint, the optimal value becomes smaller when using the triangular kernel.
\begin{figure}
\centering
\includegraphics[width=60mm]{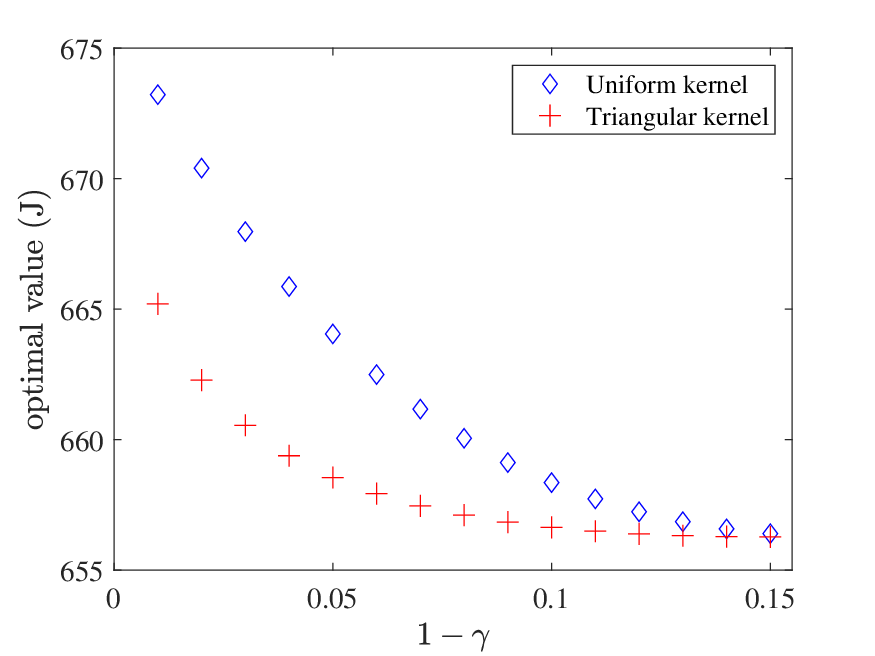}
\caption{Comparison of the optimal values with respect to the confidence level $1-\gamma$: the uniform kernel \eqref{Extension_Modified_Uniform_Compliance} and the triangular kernel \eqref{Extension_Modified_Triangular_Compliance} with $\nu = 859\,\mathrm{J}$}
\label{fig:Optimal_value_with_respect_to_1-gamma}
\end{figure}

Figure \ref{fig:Optimal_value_with_respect_to_tau} compares the optimal values with respect to the level of uncertainty $\tau$ between the uniform kernel formulation (problem \eqref{Extension_Modified_Uniform_Compliance}) and the triangular kernel formulation (problem \eqref{Extension_Modified_Triangular_Compliance}). In this figure, the diamond and the plus signs represent the optimal values of problem \eqref{Extension_Modified_Uniform_Compliance} and problem \eqref{Extension_Modified_Triangular_Compliance}, respectively. In both cases, increasing the uncertainty level enlarges the distributionally uncertainty set, which tightens the constraints and leads to a higher worst-case expected value. Figure~\ref{fig:Optimal_value_with_respect_to_tau} reveals that the difference in the optimal values resulting from the choice of kernel is relatively small although the optimal value becomes smaller when using the triangular kernel owing to the difference in kernel shapes.
\begin{figure}
\centering
\includegraphics[width=60mm]{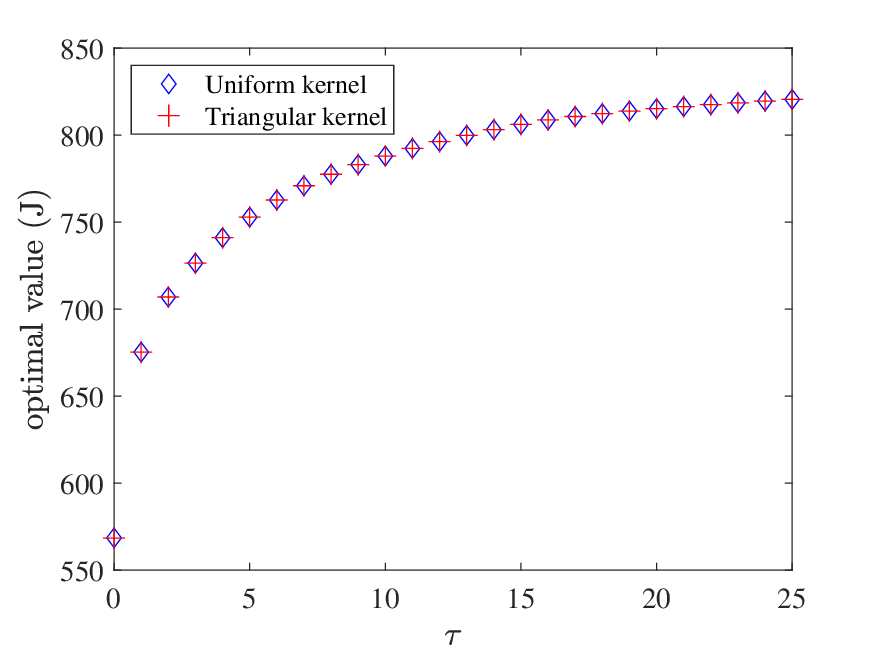}
\caption{Comparison of the optimal values with respect to the level of uncertainty $\tau$: the uniform kernel \eqref{Extension_Modified_Uniform_Compliance} and the triangular kernel \eqref{Extension_Modified_Triangular_Compliance} with $\nu = 859\,\mathrm{J}$}
\label{fig:Optimal_value_with_respect_to_tau}
\end{figure}

Figure \ref{fig:Computation_time_with_respect_to_the_number_of_samples} illustrates the result of the comparison of computation time between problem \eqref{Extension_Modified_Uniform_Compliance} with the uniform kernel and problem \eqref{Extension_Modified_Triangular_Compliance} with the triangular kernel with respect to the number of the samples. In Figure \ref{fig:Computation_time_with_respect_to_the_number_of_samples}, the diamond and the plus signs represent the computation time of problem \eqref{Extension_Modified_Uniform_Compliance} and problem \eqref{Extension_Modified_Triangular_Compliance}, respectively. The figure illustrates that as the sample size increases, the difference in computation time between problem \eqref{Extension_Modified_Triangular_Compliance}, and problem \eqref{Extension_Modified_Uniform_Compliance}, becomes more pronounced. This is attributed to the fact that the number of variables in problem \eqref{Extension_Modified_Triangular_Compliance} is greater than that in problem \eqref{Extension_Modified_Uniform_Compliance}.
\begin{figure}
\centering
\includegraphics[width=60mm]{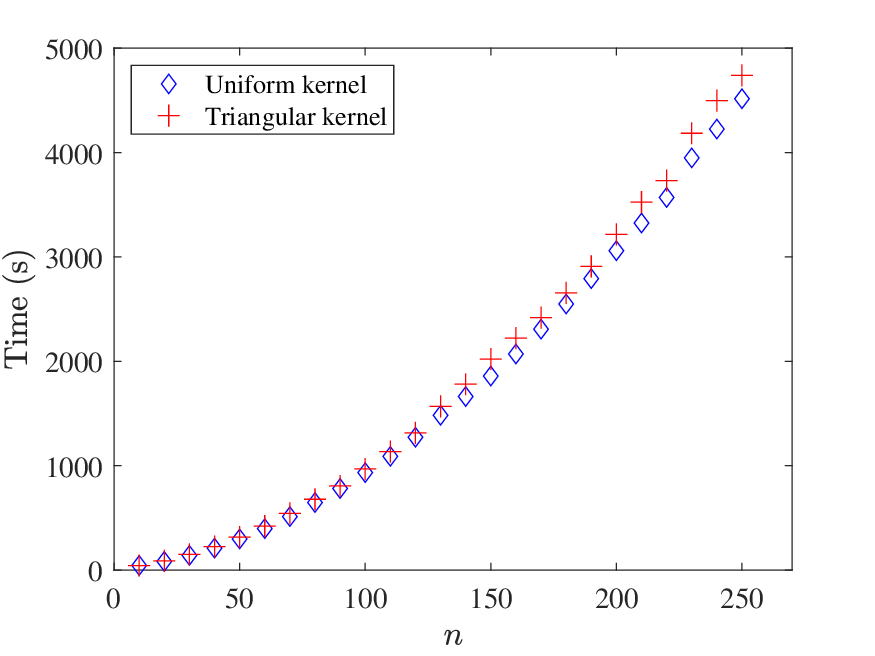}
\caption{Comparison of computation time with respect to sample size: the uniform kernel \eqref{Extension_Modified_Uniform_Compliance} and the triangular kernel \eqref{Extension_Modified_Triangular_Compliance}}
\label{fig:Computation_time_with_respect_to_the_number_of_samples}
\end{figure}

Figure \ref{fig:Computation_time_with_respect_to_the_number_of_members} illustrates the result of the comparison of computation time between problem \eqref{Extension_Modified_Uniform_Compliance} with the uniform kernel and problem  \eqref{Extension_Modified_Triangular_Compliance} with the triangular kernel with respect to the number of the members. The numerical experiment was conducted using the set of $30$ samples. In Figure \ref{fig:Computation_time_with_respect_to_the_number_of_members}, the diamond and the plus signs represent the computation time of problem \eqref{Extension_Modified_Uniform_Compliance} and problem \eqref{Extension_Modified_Triangular_Compliance}, respectively. The figure indicates that problems \eqref{Extension_Modified_Triangular_Compliance} and \eqref{Extension_Modified_Uniform_Compliance} remain computationally tractable, with problem instances involving around 2000 design variables being solved within one hour.
\begin{figure}
\centering
\includegraphics[width=60mm]{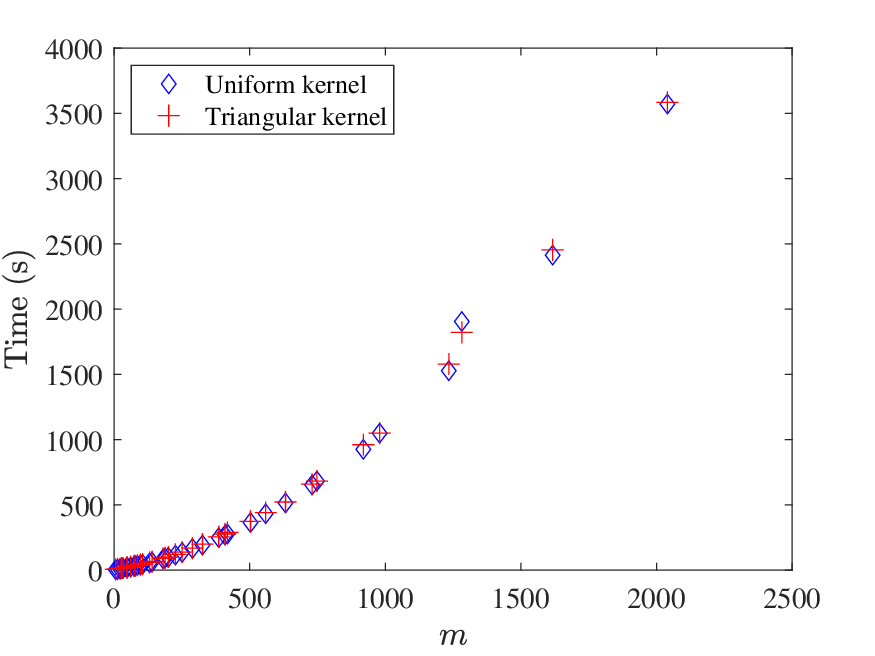}
\caption{Comparison of computation time with respect to the number of members: the uniform kernel \eqref{Extension_Modified_Uniform_Compliance} and the triangular kernel \eqref{Extension_Modified_Triangular_Compliance}}
\label{fig:Computation_time_with_respect_to_the_number_of_members}
\end{figure}

\section{Conclusion} \label{7}
In this paper, we formulated a bi-objective optimization problem for the worst-case expected value and the worst-case CVaR of the compliance of trusses, by using distributionally robust optimization and the risk measure CVaR. By employing the $\varepsilon$-constraint method, we derived a convex optimization problem minimizing the worst-case expected value under the worst-case CVaR constraint. Furthermore, we demonstrated that the derived problem can be reduced to a second-order cone programming (SOCP) problem when adopting either the uniform kernel or the triangular kernel as the kernel function for the kernel density estimation. As a result, the derived formulation ensures global optimality and allows for polynomial-time solutions using off-the-shelf SOCP solvers. These properties offer practical benefits in structural engineering. In particular, the SOCP formulation ensures efficient computation, making the method applicable even to large-scale structural systems.

The proposed formulation is expected to have two advantages over existing methods for reliability-based design optimization with confidence level. First, since it does not approximate the performance function, it is expected to achieve higher accuracy for problems involving highly nonlinear performance functions compared to existing methods. Second, while constraints in reliability-based design optimization with confidence levels correspond to constraints on VaR, the proposed formulation imposes a constraint on CVaR. This allows for the consideration of risk associated with the tail of the performance function, which cannot be accounted for by VaR, particularly in the design of structures that may experience significant performance degradation upon severe damage.

Through numerical experiments, we obtained the Pareto front for the bi-objective optimization of the worst-case expected value and the worst-case CVaR of the compliance. By analyzing variation in the Pareto solutions, we confirmed that the optimal topology of trusses changes according to the trade-off relation between the two objective functions. 

Future work includes investigating a principled, data-driven appproaches for calibrating the kernel bandwidth $h$ in kernel density estimation and the uncertainty level parameter $\tau$. Another important direction is the extension of the proposed methodology to constraints beyond compliance as well as to structural systems other than trusses. It is also important to incorporate other sources of uncertainty. For instance, when the uncertainty lies in material stiffness, the compliance remains linear with respect to the Young's modulus, as indicated by equation \eqref{decomp}, which suggests that a convex formulation may still be possible. However, deriving such a formulation explicitly remains an open challenge. In contrast, accounting for uncertainty in the design variables is considerably less straightforward, and incorporating uncertainty in the nodal positions is particularly difficult due to the nonlinearity it introduces.

\paragraph{Author Contributions}
All authors contributed to the study conception and design. T.~F. derived specific optimization formulations, developed and implemented the algorithms, conducted numerical experiments, analyzed the trade-off relation between objectives, and wrote manuscript. Y.~K. supervised the research, provided guidance on the research direction, helped to develop method, and provided manuscript refinement.
\paragraph{Funding}
The work of the second author is partially supported by 
JSPS KAKENHI JP24K07747. 
\paragraph{Availability of data and materials}
The datasets generated during the current study, as well as the source codes for the numerical experiments, are available in the GitHub repository: \url{https://github.com/Takumi-Fujiyama/distributionally-robust-truss-compliance}.
\section*{Declarations}
\paragraph{Conflicts of interest}
The authors declare no conflicts of interest to report regarding the present study. 
\paragraph{Consent for publication}
All authors consent to the publication of this manuscript.
\paragraph{Ethics approval and consent to participate}
Not Applicable.


\appendix
\section{Proofs}
\subsection{Proof of Proposition \ref{prop_convex}} \label{A.1}
First, we prepare the property of preserving convexity for composite functions through the following lemma.
\begin{lemma} \textup{\cite[Theorem~5.1]{Rockafellar70}} \label{Convex Composite} \\
Let $F: \mathbb{R}^n \to \mathbb{R} \cup \{ + \infty\}$ be a convex 
  function, and let $\psi: \mathbb{R} \to \mathbb{R} \cup \{ +\infty\}$ 
  be a non-decreasing convex function. Then $\psi(F(\cdot))$ is convex 
  on $\mathbb{R}^n$, where we use the convention $\psi(+\infty) = + \infty$.
\end{lemma}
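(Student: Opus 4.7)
The plan is to verify the defining inequality of convexity for $\psi \circ F$ by chaining three elementary facts: convexity of $F$, monotonicity of $\psi$, and convexity of $\psi$. Fix arbitrary $\bm{x}, \bm{y} \in \mathbb{R}^n$ and $\theta \in [0, 1]$, and aim to establish
\begin{align*}
  \psi(F(\theta \bm{x} + (1-\theta) \bm{y})) \leq \theta \psi(F(\bm{x})) + (1-\theta) \psi(F(\bm{y})).
\end{align*}

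The key steps, in order, will be as follows. First, I would invoke the convexity of $F$ to obtain $F(\theta \bm{x} + (1-\theta)\bm{y}) \leq \theta F(\bm{x}) + (1-\theta) F(\bm{y})$. Second, since $\psi$ is non-decreasing, applying $\psi$ to both sides of this inequality preserves its direction, yielding $\psi(F(\theta \bm{x} + (1-\theta)\bm{y})) \leq \psi(\theta F(\bm{x}) + (1-\theta) F(\bm{y}))$. Third, convexity of $\psi$ applied to the right-hand side gives $\psi(\theta F(\bm{x}) + (1-\theta) F(\bm{y})) \leq \theta \psi(F(\bm{x})) + (1-\theta) \psi(F(\bm{y}))$. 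Chaining these three inequalities gives the desired convexity inequality, and since $\bm{x}, \bm{y}, \theta$ were arbitrary, the result follows.

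The main obstacle will be the handling of the extended real-valued case, that is, when $F(\bm{x}) = +\infty$ or $F(\bm{y}) = +\infty$ (or both). In that case, the right-hand side $\theta \psi(F(\bm{x})) + (1-\theta) \psi(F(\bm{y}))$ evaluates to $+\infty$ under the stated convention $\psi(+\infty) = +\infty$, so the inequality holds trivially. The remaining substantive case is $F(\bm{x}), F(\bm{y}) \in \mathbb{R}$, where $\theta F(\bm{x}) + (1-\theta) F(\bm{y})$ is a genuine real number and both convexity of $F$ and the monotonicity of $\psi$ on $\mathbb{R} \cup \{+\infty\}$ apply without ambiguity. The monotonicity step also needs a brief check when $F(\theta \bm{x} + (1-\theta)\bm{y}) = +\infty$ but $\theta F(\bm{x}) + (1-\theta) F(\bm{y}) < +\infty$; however, this cannot occur because convexity of $F$ forbids it. Once these edge cases are disposed of, the three-step chain above completes the proof.
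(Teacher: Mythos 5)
Your proof is correct: the three-step chain (convexity of $F$, monotonicity of $\psi$, convexity of $\psi$) together with your careful disposal of the $+\infty$ cases is exactly the standard argument for this composition rule. The paper itself does not prove this lemma but simply cites it as Theorem~5.1 of Rockafellar's \emph{Convex Analysis}, and the proof given there is essentially the one you have written, so there is nothing substantive to compare.
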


Next, we provide the convexity of the compliance of trusses as follows. 
\begin{lemma} \textup{\cite[Theorem~A-D]{ABBZ92}} \label{Convex Compliance} \\
  Let $\mathcal{X} \subset \mathbb{R}^{m}$ 
  be defined by \eqref{X}, and $\bm{\xi} \in \mathbb{R}^d$ be a constant vector. 
  Then, $\pi^\textrm{c}(\,\cdot\,; \bm{\xi}):\mathcal{X} 
  \to \mathbb{R}\cup\{+\infty\}$ defined by 
  \eqref{compliance} is a convex function.
\end{lemma}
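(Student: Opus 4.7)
The plan is to recognize $\pi^\textrm{c}(\cdot;\bm{\xi})$ as the pointwise supremum, over an auxiliary variable, of a family of functions that are affine in $\bm{x}$, and then invoke the elementary fact that any such supremum is convex on $\mathbb{R}^m$ (allowing the value $+\infty$).

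Concretely, fix $\bm{\xi}\in\mathbb{R}^d$, and for each $\bm{u}\in\mathbb{R}^d$ define $g_{\bm{u}}:\mathbb{R}^m\to\mathbb{R}$ by
\begin{equation*}
g_{\bm{u}}(\bm{x}) := 2\bm{\xi}^\top \bm{u} - \bm{u}^\top K(\bm{x})\bm{u}.
\end{equation*}
Using the decomposition \eqref{decomp}, I would expand
\begin{equation*}
\bm{u}^\top K(\bm{x})\bm{u} = \sum_{j=1}^{m}\frac{E}{l_j}\bigl(\bm{\beta}_j^\top \bm{u}\bigr)^{2} x_j,
\end{equation*}
which is linear in $\bm{x}$, so $g_{\bm{u}}(\cdot)$ is an affine function of $\bm{x}$ for every fixed $\bm{u}$.

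Next, by \eqref{compliance},
\begin{equation*}
\pi^\textrm{c}(\bm{x};\bm{\xi}) = \sup_{\bm{u}\in\mathbb{R}^d} g_{\bm{u}}(\bm{x}),
\end{equation*}
i.e., $\pi^\textrm{c}(\cdot;\bm{\xi})$ is expressed as the pointwise supremum of a (possibly infinite) collection of affine functions indexed by $\bm{u}\in\mathbb{R}^d$. Since every affine function is convex, and the pointwise supremum of an arbitrary family of convex functions is convex in the extended-real-valued sense (see, e.g., standard convex-analysis results such as \cite[Theorem~5.5]{Rockafellar70}), $\pi^\textrm{c}(\cdot;\bm{\xi})$ is convex on $\mathbb{R}^m$, and hence on $\mathcal{X}\subset\mathbb{R}^m$. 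The possibility of the value $+\infty$ simply corresponds to those $\bm{x}$ for which $\bm{\xi}\notin\operatorname{range} K(\bm{x})$; this is already permitted by the stated codomain $\mathbb{R}\cup\{+\infty\}$.

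There is essentially no technical obstacle: the whole argument hinges on the linearity of $\bm{x}\mapsto K(\bm{x})$ granted by \eqref{decomp}, which makes the integrand of the supremum affine in $\bm{x}$ and thereby reduces the claim to the textbook result on suprema of affine functions. If anything, the only thing to be careful about is to clearly articulate that the result is an extended-real-valued convex function, rather than restrict attention to finite values, so that no additional feasibility or coercivity hypotheses on $\bm{x}$ are needed.
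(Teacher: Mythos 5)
Your proof is correct. The paper does not actually prove this lemma---it is imported verbatim from \cite[Theorem~A-D]{ABBZ92} with only a citation---so there is no in-paper argument to compare against; your route (linearity of $\bm{x}\mapsto K(\bm{x})$ from \eqref{decomp} makes each $g_{\bm{u}}$ affine, and the pointwise supremum of affine functions is convex in the extended-real-valued sense) is the standard, self-contained justification, and your side remark that the value $+\infty$ occurs exactly when $\bm{\xi}\notin\operatorname{range}K(\bm{x})$ is also accurate since $K(\bm{x})$ is positive semidefinite.
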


Furthermore, we provide the convexity of the conjugate function as 
follows. 
\begin{lemma} \textup{\cite[Section 3.3.1]{BV04}} \label{Convex Conjugate} \\
  Let $f: \mathbb{R}^n \to \mathbb{R}$ be a convex function. Then, its conjugate function $f^*: \mathbb{R}^n \to \mathbb{R} \cup \{ + \infty\}$ defined by
\begin{align*}
  f^*(\bm{s}) = \sup \{ \bm{s}^\top \bm{t} - f(\bm{t}) \}
\end{align*}
is convex.
\end{lemma}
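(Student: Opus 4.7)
The plan is to recognize $f^*$ as a pointwise supremum of a family of affine functions of $\bm{s}$, and then invoke the standard fact that the pointwise supremum of any family of convex functions is convex. Note that the convexity hypothesis on $f$ is actually not needed for this conclusion; the argument works for any function $f$ for which $f^*$ is defined in the extended-real sense, because convexity of the conjugate is inherited purely from the structure of the supremum and not from any property of $f$.

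First, I would fix an arbitrary $\bm{t} \in \mathbb{R}^n$ and set $g_{\bm{t}}(\bm{s}) := \bm{s}^\top \bm{t} - f(\bm{t})$. As a function of $\bm{s}$, this is affine (linear part $\bm{s}^\top \bm{t}$ and constant part $-f(\bm{t})$), hence convex on $\mathbb{R}^n$. Next I would rewrite
\begin{equation*}
f^*(\bm{s}) = \sup_{\bm{t} \in \mathbb{R}^n} g_{\bm{t}}(\bm{s}),
\end{equation*}
so that $f^*$ becomes the pointwise supremum of the affine family $\{g_{\bm{t}}\}_{\bm{t}\in\mathbb{R}^n}$.

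Then I would verify convexity directly from the definition. For any $\bm{s}_1, \bm{s}_2 \in \mathbb{R}^n$ and $\theta \in [0,1]$, the affineness of each $g_{\bm{t}}$ yields $g_{\bm{t}}(\theta \bm{s}_1 + (1-\theta)\bm{s}_2) = \theta g_{\bm{t}}(\bm{s}_1) + (1-\theta) g_{\bm{t}}(\bm{s}_2)$, and since the supremum of a sum is bounded by the sum of the suprema, taking $\sup_{\bm{t}}$ on both sides gives $f^*(\theta \bm{s}_1 + (1-\theta)\bm{s}_2) \leq \theta f^*(\bm{s}_1) + (1-\theta) f^*(\bm{s}_2)$, which is exactly the convexity inequality.

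There is essentially no obstacle here, as this is a standard textbook result. The only mild point requiring care is the extended-real codomain: since $f^*$ may take the value $+\infty$, the inequality above must be read with the usual conventions $\alpha \cdot (+\infty) = +\infty$ for $\alpha > 0$ and $0 \cdot (+\infty) = 0$, under which the argument passes through unchanged.
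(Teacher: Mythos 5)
Your proof is correct and is essentially the same argument as the one in the cited reference (Boyd and Vandenberghe, Section 3.3.1), which the paper relies on without reproducing: $f^*$ is the pointwise supremum of the affine family $\bm{s} \mapsto \bm{s}^\top\bm{t} - f(\bm{t})$, hence convex. Your observation that convexity of $f$ is not actually needed is also accurate and matches the remark made in that reference.
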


\paragraph{Proof of Proposition \ref{prop_convex}}
  It follows from Lemma \ref{Convex Compliance} that 
  $\pi^\textrm{c}(\bm{x}; \hat{\bm{\xi}}_i) - \alpha$ for each 
  $i=1, \dots, n$ is convex with respect to $(\bm{x}, \alpha)$. 
  Therefore, application of Lemma \ref{Convex Composite} 
  with $n:=m+1$ and Proposition \ref{Upsilon_Prop} shows that $\Upsilon_k ( \pi^\textrm{c}(\bm{x}; \hat{\bm{\xi}}_i) - \alpha )$ is a convex function with respect to $(\bm{x}, \alpha)$. 
  The convexity $\phi^*$ follows from Lemma \ref{Convex Conjugate}, 
  which completes the proof. 
  \hfill $\qed$

\subsection{Proof of Proposition \ref{nes_suf_condition}} \label{A.2}
\begin{proof}
  It follows from \eqref{modiefied_conjugate} that $z \geq \phi^*(y)$ 
  can be rewritten as
\begin{align}
 z \geq \frac{{({[y + 2]}^+)}^2}{4} - 1. \label{Prop.3.3}
\end{align}
The term ${[y + 2]}^+$ on the right-hand side of this inequality can be expressed using a new variable $a \in \mathbb{R}$ as
\begin{align*}
  {\left[ y+2 \right]}^+ = \min \{ a \mid a \geq y+2, a \geq 0 \}.
\end{align*}
Therefore, \eqref{Prop.3.3} holds if and only if there exists an $a \in \mathbb{R}$ satisfying
\begin{align}
  z \geq \frac{{a}^2}{4}-1,\quad a \geq y+2,\quad a \geq 0. \label{reform_modified2}
\end{align}
Moreover, the first inequality in \eqref{reform_modified2} is equivalent 
  to the following second-order cone constraint:
\begin{align*}
  z + 1 \geq
  {\left\| 
  \begin{bmatrix}
   z - 1  \\
   a  \\
  \end{bmatrix}
  \right\|},
\end{align*}
  which completes the proof. 
\end{proof}

\subsection{Proof of Proposition \ref{Uniform_decomp}} \label{A.3}
\begin{proof}
  Substitution of \eqref{Uniform_kernel} into \eqref{Upsilon_decomp} yields 
  \begin{align*}
  \Upsilon_k(c) = c\int_{-\infty}^{\frac{c}{h}} \frac{1}{2} \1_{\{|y| \leq 1\}} \text{d}y - h \int_{-\infty}^{\frac{c}{h}} \frac{1}{2} y \1_{\{|y| \leq 1\}} \text{d}y.
\end{align*}
By performing the integration, we obtain
\begin{align} \label{Upsilon_Uniform}
\Upsilon_k (c) =
\begin{cases}
  0 & \text{if}\ c < -h, \\
  \displaystyle \frac{{(c+h)}^2}{4h} & \text{if}\ -h \leq c < h, \\
  c & \text{if}\ c \geq h.
\end{cases}
\end{align}

Next, following \textup{\cite[Section~4.3.2]{Kannosan}}, 
consider the additive decomposition of $c$ given by  
\begin{align*}
  c = c_\rma + c_\rmq + c_\rms,
\end{align*}
where $c_\rma$ and $c_\rmq$ correspond to the linear and quadratic parts of $\Upsilon_k(c)$, respectively, and $c_\rms$ corresponds to the constant part. 
Then we see that \eqref{Upsilon_Uniform} can be rewritten as the optimal value of the following optimization problem:
\begin{alignat*}{3}
&\textrm{Min.} &\quad &c_\rma + \frac{{c_\rmq}^2}{4h} \\
&\st &\quad &c_\rma + c_\rmq \geq c+h, \\
& &\quad &0 \leq c_\rmq \leq 2h,\quad c_\rma \geq 0. 
\end{alignat*}
Moreover, minimizing $\displaystyle \frac{{c_\rmq}^2}{4h}$ is equivalent to minimizing the variable $s$ under the constraint
\begin{align*}
  s \geq \frac{{c_\rmq}^2}{4h},
\end{align*}
and this constraint can be rewritten as the second-order cone constraint
\begin{align*}
  s + h \geq
    {\left\| 
  \begin{bmatrix}
   s - h  \\
   c_\rmq \\
  \end{bmatrix}
  \right\|}.
\end{align*}
Therefore, $\Upsilon_k(c)$ in \eqref{Upsilon_Uniform} can finally be 
  expressed as the optimal value of problem 
  \eqref{Uniform Upsilon}, which is a 
  the second-order cone programming problem. 
\end{proof}

\subsection{Proof of Proposition \ref{Triangular_decomp}} \label{A.4}
\begin{proof}
  Substitution of \eqref{Triangular_kernel} into \eqref{Upsilon_decomp} 
  yields 
\begin{align*}
  \Upsilon_k(c) = c\int_{-\infty}^{\frac{c}{h}} (1 - |y|) \1_{\{|y| \leq 1\}} \text{d}y - h \int_{-\infty}^{\frac{c}{h}} y(1 - |y|) \1_{\{|y| \leq 1\}} \text{d}y.
\end{align*}
By performing the integration, we obtain
\begin{align} \label{Upsilon_Triangular}
\Upsilon_k (c) =
\begin{cases}
  0 & \text{if}\ c < -h, \\
  \displaystyle \frac{{(c+h)}^3}{6h^2} & \text{if}\ -h \leq c < 0, \\
  \displaystyle \frac{{(h-c)}^3}{6h^2} + c & \text{if}\ 0 \leq c < h, \\
  c & \text{if}\ c \geq h.
\end{cases}
\end{align}

Next, we consider the additive decomposition of variable $c$ given by 
\begin{align*}
  c = c_\rma + c_{\mathrm{c1}} + c_{\mathrm{c2}} + c_\rms,
\end{align*}
where $c_\rma$ corresponds to the linear part of $\Upsilon_k(c)$, $c_{\mathrm{c1}}$ and $c_{\mathrm{c2}}$ correspond to that of the cubic part, and $c_\rms$ correspond to that of the constant part. 
Then we see that $\Upsilon_k(c)$ in \eqref{Upsilon_Triangular} can be written as the optimal value of the following optimization problem:
\begin{equation*}
\begin{array}{ll}
    \textrm{Min.} & c_\rma + \displaystyle \frac{{c_{\mathrm{c1}}}^3}{6h^2} + \frac{{c_{\mathrm{c2}}}^3}{6h^2} - c_{\mathrm{c2}} + \frac{5h}{6} \\
    \st & c_\rma + c_{\mathrm{c1}} - c_{\mathrm{c2}} \geq c, \\
    & 0 \leq c_{\mathrm{c1}} \leq h,\quad 0 \leq c_{\mathrm{c2}} \leq h,\quad c_\rma \geq 0.
\end{array}
\end{equation*}
In this formulation, minimizing ${c_{\mathrm{c1}}}^3$ under the constraint $c_{\mathrm{c1}} \geq 0$ is equivalent to minimizing variable $s_1$ under the constraint
\begin{align*}
  s_1 \geq {c_{\mathrm{c1}}}^3,\quad c_{\mathrm{c1}} \geq 0,
\end{align*}
which is equivalent to
\begin{align}
  s_1 c_{\mathrm{c1}} \geq {c_{\mathrm{c1}}}^4,\quad c_{\mathrm{c1}} \geq 0. \label{ineq1}
\end{align}
Furthermore, constraint \eqref{ineq1} can be transformed with a new variable $r_1 \in \mathbb{R}$ as
\begin{align}
  s_1 c_{\mathrm{c1}} \geq {r_1}^2,\quad r_1 \geq c_{\mathrm{c1}}^2,\quad c_{\mathrm{c1}} \geq 0, \label{ineq2}
\end{align}
which can be rewritten as the second-order constraints 
\begin{equation*}
  s_1 + c_{\mathrm{c1}} \geq
    {\left\| 
  \begin{bmatrix}
    s_1 - c_{\mathrm{c1}}  \\
    2r_1  \\
  \end{bmatrix}
  \right\|},\quad 
  r_1 + \frac{1}{4} \geq
    {\left\| 
  \begin{bmatrix}
   r_1 - \frac{1}{4}  \\
   c_{\mathrm{c1}}  \\
  \end{bmatrix}
  \right\|}, \quad
  c_{\mathrm{c1}} \geq 0.
\end{equation*}
Similarly, minimizing ${c_{\mathrm{c2}}}^3$ under the constraint  $c_{\mathrm{c2}} \geq 0$ is equivalent to minimizing $s_2$ under the constraint
\begin{align}
  s_2 \geq {c_{\mathrm{c2}}}^3,\quad c_{\mathrm{c2}} \geq 0, \label{ineq10}
\end{align}
which is equivalent to
\begin{align}
  s_2 c_{\mathrm{c2}} \geq {c_{\mathrm{c2}}}^4,\quad c_{\mathrm{c2}} \geq 0. \label{ineq3}
\end{align}
Furthermore, the constraint \eqref{ineq3} can be transformed with a new variable $r_2 \in \mathbb{R}$ as
\begin{align*}
  s_2 c_{\mathrm{c2}} \geq {r_2}^2,\quad r_2 \geq c_{\mathrm{c2}}^2,\quad c_{\mathrm{c2}} \geq 0,
\end{align*}
which can be rewritten as the second-order cone constraints 
\begin{equation*}
  s_2 + c_{\mathrm{c2}} \geq
    {\left\| 
  \begin{bmatrix}
   s_2 - c_{\mathrm{c2}} \\
   2r_2  \\
  \end{bmatrix}
  \right\|},\quad 
  r_2 + \frac{1}{4} \geq
    {\left\| 
  \begin{bmatrix}
   r_2 - \frac{1}{4} \\
   c_{\mathrm{c2}} \\
  \end{bmatrix}
  \right\|}, \quad
   c_{\mathrm{c2}} \geq 0.
\end{equation*}
Moreover, minimizing $-c_{\mathrm{c2}} + \displaystyle \frac{5h}{6}$ under the constraint $c_{\mathrm{c2}} \geq 0$ is equivalent to minimizing $s_3$ under the constraint
\begin{align*}
  s_3 \geq -c_{\mathrm{c2}} + \displaystyle \frac{5h}{6},\quad  c_{\mathrm{c2}} \geq 0,
\end{align*}
which can be transformed as
\begin{align*}
  s_3 + c_{\mathrm{c2}} \geq \frac{5h}{6},\quad c_{\mathrm{c2}} \geq 0.
\end{align*}
Therefore, $\Upsilon_k(c)$ in \eqref{Upsilon_Triangular} can be finally  
  expressed as the optimal value of problem \eqref{Triangular Upsilon}, 
  which is a second-order cone programming problem. 
\end{proof}

\end{document}